\newtheorem{Theorem}{Theorem}[section]
\newtheorem*{Theorem*}{Theorem}
\newtheorem{Corollary}[Theorem]{Corollary}
\newtheorem{Conjecture}[Theorem]{Conjecture}
\newtheorem{Lemma}[Theorem]{Lemma}
\newtheorem{Proposition}[Theorem]{Proposition}
 { \theoremstyle{definition}
\newtheorem{Definition}[Theorem]{Definition}

\newtheorem{Example}[Theorem]{Example}
\newtheorem{Remark}[Theorem]{Remark}

}
\g@addto@macro\bfseries{\boldmath}
\begin{document}
\allowdisplaybreaks

\newcommand{\arXivNumber}{2412.02042}

\renewcommand{\PaperNumber}{091}

\FirstPageHeading

\ShortArticleName{$\Delta$ Invariants of Plumbed Manifolds}

\ArticleName{$\boldsymbol{\Delta}$ Invariants of Plumbed Manifolds}

\Author{Shimal HARICHURN~$^{\rm a}$, Andr\'as N\'EMETHI~$^{\rm bcde}$ and Josef SVOBODA~$^{\rm f}$}

\AuthorNameForHeading{S~Harichurn, A.~N\'emethi and J.~Svoboda}

\Address{$^{\rm a)}$~School of Mathematics, Statistics and Computer Science, University of KwaZulu-Natal,\\
\hphantom{$^{\rm a)}$}~South Africa}
\EmailD{\href{mailto:sharichurn.research@gmail.com}{sharichurn.research@gmail.com}}

\Address{$^{\rm b)}$~Alfr\'ed R\'enyi Institute of Mathematics, Re\'altanoda utca 13-15, 1053 Budapest, Hungary}
\EmailD{\href{mailto:nemethi.andras@renyi.hu}{nemethi.andras@renyi.hu}}

\Address{$^{\rm c)}$~Department of Mathematics, University of Budapest (ELTE),\\
\hphantom{$^{\rm c)}$}~P\'azm\'any P\'eter S\'et\'any~1/A, 1117, Budapest, Hungary} \Address{$^{\rm d)}$~Babe\c{s}-Bolyai University, Str. M.~Kog\u{a}lniceanu~1, 400084 Cluj-Napoca, Romania}
\Address{$^{\rm e)}$~Basque Center for Applied Mathematics (BCAM), Alameda de Mazarredo~14,\\
\hphantom{$^{\rm e)}$}~48009 Bilbao, Spain}

\Address{$^{\rm f)}$~Department of Mathematics, California Institute of Technology, Pasadena, CA 91125, USA}
\EmailD{\href{mailto:svo@caltech.edu}{svo@caltech.edu}}

\ArticleDates{Received February 15, 2025, in final form October 17, 2025; Published online October 24, 2025}

\Abstract{We study the minimal $q$-exponent $\Delta$ in the BPS $q$-series $\widehat{Z}$ of negative definite plumbed 3-manifolds equipped with a spin$^{\rm c}$-structure. We express $\Delta$ of Seifert manifolds in terms of an invariant commonly used in singularity theory. We provide several examples illustrating the interesting behaviour of $\Delta$ for non-Seifert manifolds. Finally, we compare $\Delta$ invariants with correction terms in Heegaard--Floer homology.}

\Keywords{3-manifold topology; quantum invariant; $q$-series; splice diagram}

\Classification{57K30; 57K31; 57K16}

\section{Introduction}

$\Zhat_b(Y, q)$ is a $q$-series invariant of a negative definite plumbed rational homology 3-spheres equipped with a $\spc$ structure $b$~\cite{GPPV20}. It recovers Witten--Reshetikhin--Turaev $U_q(\mathfrak{sl}_2)$-invariants in radial limits to roots of unity as conjectured by~\cite{GPPV20}, and recently proved in~\cite{Mur23}.

In this paper, we focus on the behavior of \smash{$\Zhat_b(Y, q)$} near $q=0$. In other words, we study the smallest $q$-exponents $\Delta_b$ in \smash{$\Zhat_b(Y,q)$},
\[
 \Zhat_b(Y,q) = q^{\Delta_b}\bigl(c_0+c_1 q + c_2 q^2 + \cdots\bigr), \qquad c_0 \neq 0.
\]
The rational numbers $\Delta_b$ were studied in~\cite{GPP21} where their fractional part was related to various invariants of 3-manifolds. In this work, we focus on the actual value of $\Delta_b$.

For plumbed manifolds, it is natural to consider the `canonical' $\spc$ structure $\can$. Related to it, there is a numerical topological invariant $\gamma(Y) := k^2+s$ (see Section~\ref{ss:spinc}).
It appears, e.g., in the study of Seiberg--Witten invariants~\cite{NeNi02} of the associated plumbed 3-manifold, and its use in topology goes back to Gompf~\cite{Gompf98}. It also plays an important role in singularity theory, e.g., in Laufer's formula~\cite{Laufer1977} for the Milnor number of a Gorenstein normal surface singularity.

As the main result of this paper, we prove that for Seifert manifolds, $\Delta_{\can}(Y)$ can be expressed using $\gamma(Y)$.

\begin{Theorem}\label{thm:seifert_hom}
 Let $Y = M(b_0;(a_1, \omega_1), \dots, (a_n, \omega_n))$ be a Seifert manifold associated with a~negative definite plumbing graph. Let $\can$ be the canonical $\spc$ structure of $Y$. Then $\Delta_{\can}$ satisfies
\begin{gather*}
 \Delta_{\can} = -\frac{\gamma(Y)}{4} + \frac{1}{2}.
\end{gather*}
If $Y$ is not a lens space, then $\Delta_{\can}$ is minimal among all $\Delta_b$, $b \in \spc(Y)$.
\end{Theorem}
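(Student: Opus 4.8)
The plan is to compute $\Delta_{\can}$ directly from the combinatorial formula for $\Zhat_b$ of a Seifert manifold and to match it against the known expression for $\gamma(Y)$. Recall that for negative definite plumbed manifolds the $q$-series $\Zhat_b$ arises as a residue/lattice-cohomology type sum, and for Seifert manifolds it admits a well-known closed form coming from the work of Gukov--Pei--Putrov--Vafa and subsequent refinements: after fixing normalization conventions, $\Zhat_b(Y,q)$ is (up to an overall prefactor $q^{\text{const}}$) a theta-like expression built from the Seifert data $(b_0;(a_1,\omega_1),\dots,(a_n,\omega_n))$. The first step is therefore to write down this explicit Seifert formula and read off $\Delta_b$ as the minimal exponent, paying careful attention to the overall $q$-power prefactor, since that prefactor is exactly where the interesting invariant lives.

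Second, I would express the overall prefactor in terms of the orbifold Euler number $e = -b_0 - \sum_i \omega_i/a_i$, the Seifert invariants, and the signature/number of vertices of the plumbing graph. The quantity $\gamma(Y) = k^2 + s$ is, by the discussion preceding the theorem, $\langle K, K\rangle + s$ where $K$ is the canonical class (the vector solving the adjunction equations $K\cdot E_v = -E_v\cdot E_v - 2$) and $s$ is the number of vertices; this is precisely the combination appearing in Laufer's formula and in the Seiberg--Witten / Casson--Walker normalization. The heart of the computation is to show that the minimal exponent of the canonical-$\spc$ series, once the prefactor and the minimal term from the theta sum are combined, collapses to $-\gamma(Y)/4 + 1/2$. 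For the canonical structure the relevant lattice point contributing the lowest power should be the one closest to $-K/2$ (or to the appropriate Weyl vector shift), so I expect the $k^2 = \langle K, K\rangle$ term to emerge from completing the square in the quadratic form governing the theta exponent.

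For the minimality claim when $Y$ is not a lens space, I would argue that among all $\spc$ structures $b$, the canonical one minimizes the leading exponent. The natural approach is to note that $\Delta_b$ is controlled by the minimum of the relevant quadratic (or piecewise-linear, after the Seifert reduction) function over the coset of the lattice determined by $b$, and that the canonical coset is the one on which this minimum is smallest. Concretely, $4\Delta_b + \text{const}$ should be expressible as a minimum of $(x+\tfrac12 K)^2$-type quantities over characteristic vectors in the class $b$, and the canonical class $\can$ realizes the global minimum of this form; the strict inequality (so that $\can$ is \emph{the} minimizer, or at least minimal) uses that for non-lens-space Seifert manifolds the number of singular fibers $n \geq 3$, which forces the competing cosets to have strictly larger minimal values.

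The main obstacle I anticipate is bookkeeping the normalization of $\Zhat_b$ precisely enough that the prefactor contributes exactly $\gamma(Y)$ and not a shifted variant: different sources normalize the $\widehat{Z}$ prefactor (by $\sigma$, by the signature, by $3\sigma + \sum E_v\cdot E_v$, etc.) differently, and the clean statement $-\gamma/4 + 1/2$ depends on choosing the convention consistently with the definition of $\gamma = k^2 + s$ used in Section~\ref{ss:spinc}. A secondary difficulty is handling the lens-space degeneration carefully, since there the theta sum collapses and the minimality argument genuinely fails, so the proof of minimality must isolate exactly the structural feature ($n\geq 3$) that distinguishes the non-lens-space case.
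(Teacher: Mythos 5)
Your outline contains a genuine gap at its central step: you propose to realize $4\Delta_b$ (up to a shift) as the minimum of a $\bigl(x+\tfrac12 K\bigr)^2$-type quantity \emph{over the coset of characteristic vectors in the class $b$}. That is the recipe for the Heegaard--Floer correction term $d_b(Y)$ (Theorem~\ref{thm:cor_terms}), not for $\Delta_b$. In the definition of $\Zhat_b$, the theta series $\Theta_b$ is paired against the symmetric expansion of $\prod_i\bigl(z_i-z_i^{-1}\bigr)^{2-\delta_i}$ and a constant term is extracted, so the exponents $-l^2/4$ that actually occur are only those with $-l$ in the support $\LCt$ of that expansion: components $\pm 1$ at leaves, $0$ at degree-$2$ vertices, and at the central node an integer $t\equiv n\pmod 2$ with $\lvert t \rvert\geq n-2$ (Lemma~\ref{description-c}). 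Identifying this constrained set is the key step (Lemma~\ref{lm:delta_minimizer}), and minimizing over the full coset instead proves something false: for $\Sigma(p,q,pq+1)$ the coset minimum yields $d(Y)=0$, while $\Delta(Y)=\rho(\rho-1)+\tfrac12$ --- the two invariants differ by a polynomial amount precisely because the minimization sets differ. Completing the square also does not settle the problem once the constraints are imposed; the paper instead writes the form in Seifert data via splice-diagram weights, as in \eqref{eq:quadratic}, and shows by an explicit monotonicity argument in $t$ (plus a comparison of $t=n-2$ with $t=n$, and a sign-flip comparison at the leaves) that $\pm(2u-\delta)$ are the \emph{unique} minimizers over all of $\LCt$; since $2u-\delta$ represents $\can$, this single computation yields both the formula and, because the global minimizer lies in the canonical class, the minimality of $\Delta_{\can}$ among all $\spc$ structures for $n\geq 3$.

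A second missing step is the cancellation check. Even with the minimizing vectors in hand, one must verify that their coefficients do not cancel in the $q$-series: here $\tilde{c}_{2u-\delta}=\tilde{c}_{\delta-2u}=(-1)^n/2$, so the two contributions reinforce rather than cancel even when $\pm(2u-\delta)$ represent the same ($\spin$) structure. This is not cosmetic: Example~\ref{ex:cancel_seifert} exhibits a Seifert manifold where all but one term of $\Zhat_{\can}$ cancels, so ``reading off the minimal exponent'' from a closed-form Seifert expression, as your first step suggests, can silently fail without such a verification. (Two smaller points: the paper's convention is ${\rm e}=-b_0+\sum_i\omega_i/a_i$, not $-b_0-\sum_i\omega_i/a_i$; and the lens-space case requires the separate computation \eqref{eq:delta_lens}, where $\Delta_{\can}$ satisfies the same formula but is genuinely not minimal --- your sketch correctly anticipates this degeneration but does not carry it out.)
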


The idea of the proof is to express $\Delta_{\can}$ as a minimum of a quadratic form, given by the symmetric intersection form of the plumbing graph, over certain integral vectors. The key is identifying this set over which we minimize (Lemma~\ref{lm:delta_minimizer}). We show that in this set, there exists a unique vector (up to sign) that minimizes the quadratic form.

Once we leave Seifert manifolds, the computation of $\Delta_b$ invariants becomes much more complicated. We illustrate this on plumbing graphs with exactly two vertices of degree 3 and no vertices of degree $\geq 4$ in Section~\ref{s:H_shaped}. In this case, $\Delta_{\can}$ is often smaller than \smash{$-\frac{1}{4}\gamma(Y) + \frac{1}{2}$}, because we have a larger freedom in finding minimizing vectors. To analyze $\Delta$ of these graphs, we use splice diagrams~\cite{NeumannWallBook86,NW05a, Sie80} of plumbed manifolds, building on~\cite{GKS23}.

Surprisingly, $\Delta_{\can}$ can also be larger than \smash{$-\frac{1}{4}\gamma(Y) + \frac{1}{2}$}, as a result of interesting cancellations in the formula for $\Zhat_b(Y)$, see Example~\ref{ex:cancel_H}. Namely, $\Zhat_b(Y)$ can be expressed as a weighted~sum over certain lattice points and those can sometimes be organized in pairs with weights of opposite signs.
The cancellation often can be avoided by refining the weights, as provided by~the two-variable series \smash{$\Zhathat_b(q,t)$} defined in~\cite{AJK23}. However, Example~\ref{ex:t_cancel_0} shows that cancellations may occur even in \smash{$\Zhathat_b(q,t)$}.

An analogy between $\Delta_b(Y)$ and the correction terms $d_b(Y)$ in Heegaard--Floer homology was proposed in~\cite{GPP21}. In~\cite{harichurn2023deltaa}, the first author demonstrated on Brieskorn spheres that unlike~$d_b(Y)$,~$\Delta_b(Y)$ are not cobordism invariants.

Using the explicit formula for $\Delta_{\can}$ of Seifert manifolds given by Theorem~\ref{thm:seifert_hom}, we compare~$\Delta_{\can}$ and~$d_{\can}$ for some classes of Brieskorn spheres, where $d_{\can}$ is explicitly known~\cite{BorNem2011}. We find that $\Delta_{\can}$ is generically much larger than $d_{\can}$. The reason for this discrepancy is that although~$\Delta_{\can}$ and~$d_{\can}$ are both minima of certain closely related quadratic forms, for $\Delta_{\can}$, the quadratic form is being minimized over a much smaller set of vectors than the one for $d_{\can}$.

The invariants $\Zhat_b(Y,q)$ (and hence $\Delta_b(Y)$) were defined for a class of plumbed 3-manifolds given by \emph{weakly negative definite} plumbing graphs~\cite[Definition~4.3]{GM21}. In the appendix, we prove that each such plumbing graph can be transformed to a (strictly) negative definite plumbing graph by a sequence of Neumann moves. This means that the classes of 3-manifolds given by weakly negative definite graphs and by negative definite graphs coincide. For this reason, we do not use the weak notion in this paper.

\section{Preliminaries}

In this section, we collect the necessary material on plumbed manifolds and $\spc$ structures on them.

\subsection{Plumbed 3-manifolds}\label{ss:plumbing}

Let $\Gamma = (V,E,m)$ be a finite tree with the set of vertices $V$ and edges $E$, and a vector $m \in \ZZ^{\lvert V \rvert}$ consisting of integer decorations $m_v$ for each vertex $v \in V$. Let $s = \lvert V\rvert$ and let $\delta = (\delta_v)_{v \in V} \in \ZZ^s$ be the vector of the degrees (valencies) of the vertices. We often implicitly order vertices of $V$, so that $V = \{v_1,v_2,\dots,v_s\}$ and write the quantities associated to $v_i$ with subscript $i$.
We can record $\Gamma$ using an $s\times s$ \emph{plumbing matrix} $M=M(\Gamma)$, defined by
\[
 M_{vw} =
 \begin{cases}
 m_v & \text{if} \ v = w, \\
 1 & \text{if} \ (v, w) \in E, \\
 0 & \text{otherwise.}
 \end{cases}
\]
We always assume that $M$ is a negative definite matrix. For a vector $l \in \Z^s$, we write
\[
 l^2 = l^T M^{-1}l.
\]
Note that the quadratic form $l \mapsto -l^2$ is positive definite.

From $\Gamma$, we can construct a closed oriented 3-manifold $Y := Y(\Gamma)$ by \emph{plumbing}~\cite{Neu81}. For each vertex $v$, we consider a circle bundle over $S^2$ with Euler number $m_v$. Then we glue the bundles together along tori corresponding to the edges in $E$. Manifolds given by this construction, with~$M$ negative definite, are called \emph{negative definite plumbed manifolds}.

From the construction above, it follows that $Y$ is a \emph{rational homology sphere}, that is, $H_1(Y) = H_1(Y,\ZZ)$ is finite. We have $H_1(Y) \cong \ZZ^s/M \ZZ^s$ and if we let $H = H_1(Y)$ and let $\lvert\cdot\rvert$ denote cardinality, then we have $\lvert H\rvert=\lvert\det M\rvert$. The quadratic form $l \mapsto -l^2$ takes values in $\lvert H\rvert^{-1}\ZZ$ as the adjugate matrix $\adj M = (\det M) M^{-1}$ of $M$ has integer entries.

\subsection[Spin\^{}c structures]{$\boldsymbol{\Spc}$ structures}\label{ss:spinc}

The set $\spc(Y)$ of $\spc$ structures on $Y$ admits a natural free and transitive action of $H_1(Y)$, hence it is finite. It can be identified with the set $(2\ZZ^s + m)/2M\ZZ^s$ of \emph{characteristic vectors}. For us, it is convenient to use another identification
\begin{equation}\label{eq:spinc_identif}
\spc(Y) \cong (2\ZZ^s + \delta)/2M\ZZ^s.
\end{equation}
This identification is obtained from the usual characteristic vectors via the map $l \mapsto l - Mu$, where $u = (1, 1, \dots, 1)$, which is justified by the identity $ \delta + m = Mu$.

For a vector $b \in 2\ZZ^s + \delta,$ we denote the corresponding $\spc$ structure as $[b] \in (2\ZZ^s + \delta)/2M\ZZ^s$. We often omit the brackets when it is clear from the context, e.g., we write \smash{$\Zhat_b$} for \smash{$\Zhat_{[b]}$}.

We consider the vector $2u - \delta$ and the corresponding `canonical' $\spc$ structure $\can = [2u-\delta]$. Its characteristic vector is $k = 2u-\delta + Mu = m + 2u$. The rational number
\[
\gamma(Y) := k^2+s = (2u-\delta +Mu)^2+s
\]
does not depend on the plumbing representation of $Y$, so it is a topological invariant of $Y$.
Denote by $\Tr(M) = \sum_{v \in V}m_v$ the trace of the plumbing matrix $M$. Then $\gamma(Y)$ can be expressed as follows:
\begin{Proposition}[\cite{NeNi02}]\label{base-k2-s}
 Let $Y:=Y(\Gamma)$ be a negative definite plumbed manifold, which is a~rational homology sphere. Then
 \begin{align*}
 \gamma(Y) &{}= 3s + \Tr(M) + 2 + \sum_{v,w \in V} (2-\delta_v)(2-\delta_w)M_{vw}^{-1}\\
 &{}=3s + \Tr(M) + 2 + (2u-\delta)^2.
 \end{align*}
\end{Proposition}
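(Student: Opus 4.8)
The plan is to expand the definition $\gamma(Y) = k^2 + s = (2u - \delta + Mu)^2 + s$ directly, exploiting the bilinearity of the pairing $l \mapsto l^T M^{-1} l$ and the symmetry of $M$ (hence of $M^{-1}$). Writing $w := 2u - \delta$ so that $k = w + Mu$, I would expand
\[
 k^2 = (w + Mu)^T M^{-1}(w + Mu) = w^T M^{-1} w + w^T M^{-1}(Mu) + (Mu)^T M^{-1} w + (Mu)^T M^{-1}(Mu).
\]
Using $M^{-1}M = I$ and $M^T = M$, the first term is $w^2 = (2u - \delta)^2$, the two cross terms each equal the scalar $u^T w$ and so combine to $2 u^T w$, and the last term collapses to $u^T M u$. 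This yields the clean intermediate identity
\[
 k^2 = (2u - \delta)^2 + 2 u^T(2u - \delta) + u^T M u,
\]
reducing the problem to evaluating the two scalars $u^T(2u-\delta)$ and $u^T M u$.

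The second step is to evaluate these two quantities combinatorially, which is where the hypothesis that $\Gamma$ is a tree enters. Since a tree on $s$ vertices has exactly $s-1$ edges, the handshake identity gives $\sum_{v} \delta_v = 2\lvert E\rvert = 2(s-1)$, so that
\[
 u^T(2u - \delta) = \sum_{v \in V}(2 - \delta_v) = 2s - 2(s-1) = 2.
\]
For the quadratic term, $u^T M u = \sum_{v,w} M_{vw}$ splits into the diagonal contribution $\Tr(M)$ and the off-diagonal contribution; each edge $(v,w) \in E$ contributes $1$ to both $M_{vw}$ and $M_{wv}$, so the off-diagonal part equals $2\lvert E\rvert = 2(s-1)$, giving $u^T M u = \Tr(M) + 2(s-1)$.

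Substituting both evaluations gives
\[
 k^2 = (2u - \delta)^2 + 4 + \Tr(M) + 2(s-1) = (2u - \delta)^2 + \Tr(M) + 2s + 2,
\]
and adding $s$ produces the second claimed formula $\gamma(Y) = 3s + \Tr(M) + 2 + (2u - \delta)^2$. The first formula is then immediate: expanding $(2u - \delta)^2 = (2u - \delta)^T M^{-1}(2u - \delta)$ entrywise gives exactly $\sum_{v,w \in V}(2 - \delta_v)(2 - \delta_w) M_{vw}^{-1}$. The computation is essentially routine linear algebra; the only genuine input is the tree combinatorics, and I expect the one point requiring care to be the cancellation $\sum_v(2 - \delta_v) = 2$, which relies crucially on $\lvert E\rvert = s - 1$ and is precisely what forces $\gamma(Y)$ into this simple shape.
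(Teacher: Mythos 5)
Your proof is correct, and since the paper states this Proposition as a citation to~\cite{NeNi02} without reproducing a proof, your direct expansion is exactly the standard derivation behind the cited result: writing $k=(2u-\delta)+Mu$, using symmetry of $M$ to get $k^2=(2u-\delta)^2+2u^T(2u-\delta)+u^TMu$, and then evaluating $u^T(2u-\delta)=2$ and $u^TMu=\Tr(M)+2(s-1)$ from the tree identity $\lvert E\rvert=s-1$ (equivalently, the identity $\delta+m=Mu$ noted in Section~\ref{ss:spinc}). All steps check out, including the entrywise expansion $(2u-\delta)^2=\sum_{v,w}(2-\delta_v)(2-\delta_w)M_{vw}^{-1}$ giving the first displayed formula, and you correctly isolate the connectedness/tree hypothesis as the only non-formal input.
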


\subsection[The Zhat\_b invariants]{The $\boldsymbol{\Zhat_b}$ invariants}

For the rest of the paper, let $Y = Y(\Gamma)$ be a negative definite plumbed rational homology sphere with a chosen negative definite plumbing matrix $M$ of $\Gamma$.

\begin{Definition}
Let $b \in 2\ZZ^s + \delta$ be a vector representing a $\spc$ structure $[b]$. For $\lvert q \rvert<1$, the GPPV $q$-series $\Zhat_b(Y,q)$ is defined by
\begin{equation}\label{def:zhat}
 \Zhat_b(Y,q) = q^{\frac{-3s - \Tr(M)}{4}} \vp\oint\limits_{\lvert z_i \rvert = 1}\prod_{v_i\in V} \frac{{\rm d}z_i}{2\pi {\rm i} z_i} \biggl(z_i - \frac{1}{z_i}\biggr)^{2-\delta_i} \Theta_{b}(z),
\end{equation}
where
\begin{equation}\label{eq:theta}
 \Theta_{b}(z) = \sum_{l \in 2M\ZZ^s + b} z^l q^{-\frac{l^2}{4}},
\end{equation}
and $\vp$ denotes the principal part of the integral and \smash{$z=\prod_{i=1}^s z_i^{l_i}$}.
\end{Definition}

We often omit the manifold $Y$ when it is clear from the context, e.g., we write~$\Zhat_b(q)$ for $\Zhat_b(Y,q)$. Clearly, $\Zhat_b(q)$ does not depend on the choice of $b \in [b]$. The convergence of $\Zhat_b(Y)$ for $\lvert q \rvert<1$ follows from the negative definiteness of $M$.

\section[Delta\_b invariants]{$\boldsymbol{\Delta_b}$ invariants}

\begin{Definition}
The \emph{Delta invariant} $\Delta_b$ is the smallest $q$-exponent in $\Zhat_b(q)$. If $\Zhat_b(q) = 0$, we set $\Delta_b = \infty$.
\end{Definition}

Using $\Delta_b$, we can write the $\Zhat_b(q)$ in the following form:
\[
 \Zhat_b(q) = 2^{-s} q^{\Delta_b}\sum_{i=0}^\infty r_iq^i = 2^{-s} q^{\Delta_b}\bigl(r_0 + r_1q^1 + r_2q^2 + \cdots\bigr)
\]
for some integers $r_i$ with $r_0 \neq 0$. The integrality of $r_i$ can be seen directly from the defining formula \eqref{def:zhat} of $\Zhat_b(Y,q)$. The form of the exponents is justified by part (1) of the following Lemma (recall that $\lvert H\rvert$ is the order of $H_1(Y,\ZZ)$).

\begin{Lemma}
 \leavevmode
 \begin{enumerate}\itemsep=0pt
 \item[$(1)$] The differences between the exponents of $\Zhat_b(q)$ are integers.
 \item[$(2)$] The fractional part of the exponents $($and in particular of $\Delta_b)$ is given by
 \[
 \frac{-3s - \Tr(M) -b^2}{4} {\pmod{1}}.
 \]
 Consequently $4 \lvert H \rvert \Delta_b \in \ZZ \cup \{\infty\}$.
 \item[$(3)$] $\lvert H\rvert\Delta_a \equiv \lvert H\rvert\Delta_{b} \pmod{1}$ for any two $\spc$ structures $[a]$, $[b]$ on $Y$.
\end{enumerate}
\end{Lemma}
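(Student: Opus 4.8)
The plan is to read off, directly from the defining formula~\eqref{def:zhat}, the exact set of $q$-exponents that can occur in $\Zhat_b(q)$, and then analyze their arithmetic. First I would observe that the contour integral $\oint \prod_i \frac{{\rm d}z_i}{2\pi {\rm i} z_i}(\cdots)$ extracts the $z$-constant coefficient, so that expanding each factor $(z_i - 1/z_i)^{2-\delta_i}$ (via the principal-part prescription $\vp$) into a Laurent series with \emph{integer} coefficients and pairing it against $\Theta_{b}(z) = \sum_{l \in 2M\ZZ^s + b} z^l q^{-l^2/4}$ yields
\[
 \Zhat_b(q) = \sum_{l \in 2M\ZZ^s + b} c_l\, q^{\frac{-3s-\Tr(M)}{4} - \frac{l^2}{4}},
\]
where $c_l = \prod_i [z_i^{-l_i}](z_i - 1/z_i)^{2-\delta_i} \in \ZZ$. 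In particular every exponent occurring in $\Zhat_b(q)$ has the form $\frac{-3s - \Tr(M) - l^2}{4}$ for some $l$ in the single coset $2M\ZZ^s + b$, and this is the observation on which all three parts rest.

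The core step is then a direct computation of $l^2$ along this coset. Writing $l = 2Mx + b$ with $x \in \ZZ^s$ and using $M^T = M$, I would expand
\[
 l^2 = l^T M^{-1} l = b^T M^{-1} b + 4 x^T b + 4 x^T M x = b^2 + 4\bigl(x^T b + x^T M x\bigr).
\]
Since $b \in 2\ZZ^s + \delta \subseteq \ZZ^s$ and $M$ has integer entries, both $x^T b$ and $x^T M x$ are integers, so the exponent equals $\frac{-3s - \Tr(M) - b^2}{4}$ minus the integer $x^T b + x^T M x$. This gives part~(1), since any two exponents differ by an integer, and it gives the fractional-part formula in part~(2), since the common non-integer part is exactly $\frac{-3s - \Tr(M) - b^2}{4} \pmod 1$.

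For the remaining integrality claims I would invoke that $\adj M = (\det M) M^{-1}$ has integer entries and $\lvert H\rvert = \lvert\det M\rvert$, so $\lvert H\rvert M^{-1} = \pm\,\adj M$ is an integer matrix. Hence $\lvert H\rvert b^2 = b^T(\lvert H\rvert M^{-1}) b \in \ZZ$, and writing $\Delta_b = \frac{-3s - \Tr(M) - b^2}{4} - n$ for the minimizing integer $n$ (or $\Delta_b = \infty$ if $\Zhat_b = 0$) gives $4\lvert H\rvert\Delta_b = \lvert H\rvert(-3s - \Tr(M)) - \lvert H\rvert b^2 - 4\lvert H\rvert n \in \ZZ$, proving the ``consequently'' in part~(2). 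For part~(3) I would write $a = b + 2y$ with $y \in \ZZ^s$ (possible as $a - b \in 2\ZZ^s$), expand $b^2 - a^2 = -4\bigl(y^T M^{-1} b + y^T M^{-1} y\bigr)$, and conclude $\frac{\lvert H\rvert(b^2 - a^2)}{4} = -y^T(\lvert H\rvert M^{-1}) b - y^T(\lvert H\rvert M^{-1}) y \in \ZZ$; combined with $\Delta_a - \Delta_b \equiv \frac{b^2 - a^2}{4} \pmod 1$ from the fractional-part formula, this yields $\lvert H\rvert\Delta_a \equiv \lvert H\rvert\Delta_b \pmod 1$.

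The computations are routine; the only genuinely delicate point I expect is justifying the first paragraph's claim that the exponents are \emph{exactly} those of the stated form with integer coefficients — that is, that the principal-part expansion of the negative powers $(z_i - 1/z_i)^{2-\delta_i}$ at vertices of valency $>2$ produces a well-defined Laurent series with integer coefficients and introduces no exponents outside the coset $2M\ZZ^s + b$. Once this coset structure is pinned down, the arithmetic — symmetry of $M$, integrality of $\adj M$, and the reduction modulo $2\ZZ^s$ relating distinct $\spc$ structures — is straightforward. I would also keep the degenerate case $\Delta_b = \infty$ in mind, for which parts~(2) and~(3) are to be read by convention.
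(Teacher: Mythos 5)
Your proof is correct and follows essentially the same route as the paper's: everything reduces to the coset identity $(b+2Mx)^2=b^2+4\bigl(x^Tb+x^TMx\bigr)$ on $2M\ZZ^s+b$ (the paper phrases parts (1)--(2) as the difference $l^Tn+n^TMn$ for $l,\,l+2Mn\in[b]$, and part (3) via $b=2l+\delta$, $a=2n+\delta$ instead of your $a=b+2y$, which is the same computation) together with the integrality of $\adj M = (\det M)M^{-1}$. One immaterial slip: the coefficients $c_l$ coming from the principal-value (symmetric) expansion lie in $2^{-s}\ZZ$ rather than $\ZZ$, since each degree-$\geq 3$ factor is an average of two expansions carrying a factor $\tfrac12$ --- which is why the paper writes $\Zhat_b(q)=2^{-s}q^{\Delta_b}\sum_i r_iq^i$ with $r_i\in\ZZ$ --- but the lemma concerns only the exponents, so your argument stands.
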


\begin{proof}
$(1)$ Consider two vectors $l, l+2Mn \in [b]$, where $l,n \in \ZZ^s$. The difference of the corresponding exponents is
 \[
 \frac{-l^2}{4}+\frac{(l+2Mn)^2}{4} = l^Tn + n^T Mn \in \ZZ.
 \]

$(2)$ The exponent of a representative $b \in [b]$ reads
 \[
 \frac{-3s - \Tr(M) - b^2}{4} \in \frac{1}{4\lvert H\rvert}\ZZ
 \]
 By (1), all the other exponents have the same fractional part.

$(3)$
 We have
 \[
 \lvert H\rvert\Delta_b(Y) - \lvert H\rvert\Delta_{a}(Y) \equiv \frac{\lvert H\rvert\bigl(b^2 -a^2\bigr)}{4} \pmod{1}.
 \]
 By writing $b = 2l + \delta$ and $a = 2n+\delta$ with $l,n \in \ZZ$, we see that
 \[
 \frac{1}{4}\lvert H\rvert \bigl(b^2 - a^2\bigr) = \lvert H\rvert \bigl(l^2 + l^TM^{-1}\delta - n^2 + n^TM^{-1}\delta\bigr) \in \ZZ.
\tag*{\qed}
\]
\renewcommand{\qed}{}
\end{proof}

\begin{Remark}
The set of $\spc$ structures admits a natural involution, called conjugation, given by \smash{$[b] \mapsto \overline{[b]} := [-b]$}, i.e., the reflection through the origin under the identification~\eqref{eq:spinc_identif}. It is known that \smash{$\Zhat_{[b]\vphantom{\overline{[b]}}}(q) = \Zhat_{\overline{[b]}}(q)$}, hence we have \smash{$\Delta_{[b]\vphantom{\overline{[b]}}} =\Delta_{\overline{[b]}}$}.
\end{Remark}

\subsection[The exponents of Zhat\_b]{The exponents of $\boldsymbol{\Zhat_b}$}

The $\Delta_b$ invariant is the minimal $q$-exponent in the series $\Zhat_b(q)$. The exponents are, up to an overall shift, given by the quadratic form $l \mapsto -l^2$. Here $l$ are lattice vectors that run over a~certain subset $\LC_b \subset \ZZ^s$. We need to identify this subset.

We order the set of vertices $V$ of the plumbing tree $\Gamma$ by their degree,
\[
 V = \{v_1, \dots, v_{s_1}, v_{s_{1}+1}, \dots, v_{s_{1}+s_{2}}, v_{s_{1}+s_{2}+1}, \dots, v_{s_{1}+s_{2} + s_{3} = s}\}.
\]
We have $s_1$ leaves, $s_2$ vertices of degree 2 and $s_3$ vertices of degree $\geq 3$.

The integrand of $\Zhat$ contains the rational function
\begin{equation}\label{eq:rat_function}
 \prod_{i=1}^s\bigl(z_i- z_i^{-1}\bigr)^{2-\delta_i} = \prod_{i=1}^{s_1} \bigl(z_i-z_i^{-1}\bigr) \prod_{i=s_1+s_2+1}^{s} \frac{1}{\bigl(z_i-z_i^{-1}\bigr)^{\delta_i-2}}.
\end{equation}
The integration in \eqref{def:zhat} is equivalent to the following procedure: First, we expand each term of the product above using the \emph{symmetric expansion}~-- the average of Laurent expansions as $z_i \rightarrow 0$ and $z_i \rightarrow \infty$, and multiply these together, giving an element of $\Z\bigl[z_1^{\pm 1},\dots, z_s^{\pm 1}\bigr]$,
\begin{equation}\label{eq:rat_function_expn}
 \prod_{i=1}^s \se \bigl(z_i- z_i^{-1}\bigr)^{2-\delta_i} = \sum_{l \in \ZZ^s} \ct_l z^l = \sum_{l \in \ZZ^s} \ct_l z_1^{l_1}z_2^{l_2}\cdots z_s^{l_s}.
\end{equation}
Then we multiply the result with the theta function $\Theta_b(z)$ in \eqref{eq:theta}, and we extract the constant coefficient in variables $z_i$, giving the $q$-series $\Zhat_b(q)$.

Let $\LCt$ denote the set of all the vectors $l \in \ZZ^s$ with nonvanishing coefficient $\ct_l \neq 0$ in \eqref{eq:rat_function_expn}. Similarly, define $\LCt_b = \LCt \cap -(2\ZZ^s+b)$. The reason for the sign is that we are pairing $l \in \LCt$ with $-l \in 2\ZZ^s+b$ when extracting the constant coefficient. Note that while $\LCt$ is symmetric about the origin, $2\ZZ^s+b$ in general is not.

\begin{Lemma}\label{description-c}
The set $\LCt$ consists of vectors whose components satisfy the following conditions:
 \[\LCt =
 \left\{l=(l_1,\dots,l_{s_1}, 0, \dots 0, t_{s_1+s_2+1}, \dots, t_{s_3}) \in \ZZ^s
 \;\middle\vert\;
 \begin{aligned}
 &l_i = \pm 1 \\
 &t_i \equiv \delta_i \pmod{2} \\
 &\lvert t_i \rvert \geq \delta_i-2
 \end{aligned}
 \right\}.
\]
\end{Lemma}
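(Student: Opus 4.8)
The plan is to exploit the fact that the symmetric expansion in \eqref{eq:rat_function_expn} factorizes across the variables $z_1,\dots,z_s$. Since
\[
 \prod_{i=1}^s \se \bigl(z_i - z_i^{-1}\bigr)^{2-\delta_i} = \prod_{i=1}^s \Bigl(\se \bigl(z_i - z_i^{-1}\bigr)^{2-\delta_i}\Bigr),
\]
and each factor involves only the single variable $z_i$, the coefficient $\ct_l$ factors as a product $\prod_{i=1}^s c^{(i)}_{l_i}$, where $c^{(i)}_{l_i}$ is the coefficient of $z_i^{l_i}$ in the one-variable symmetric expansion of $\bigl(z_i - z_i^{-1}\bigr)^{2-\delta_i}$. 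Hence $l \in \LCt$ if and only if $c^{(i)}_{l_i} \neq 0$ for every $i$, and the problem reduces to determining, for each possible vertex degree $\delta_i$, the support of the single-variable symmetric expansion of $\bigl(z - z^{-1}\bigr)^{2-\delta}$.

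I would then split into three cases according to the degree $\delta$, matching the three blocks of vertices. For a leaf ($\delta = 1$) the factor is the genuine Laurent polynomial $z - z^{-1}$, whose only nonzero coefficients occur at the exponents $\pm 1$; this gives the condition $l_i = \pm 1$ on the first block. For a degree-$2$ vertex ($\delta = 2$) the factor is the constant $1$, supported only at exponent $0$; this gives the zero entries in the middle block.

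The substantive case is $\delta \geq 3$, where $2 - \delta = -k$ with $k := \delta - 2 \geq 1$ and we must expand $1/(z - z^{-1})^k$. I would write out the two Laurent expansions explicitly. From $(z - z^{-1})^k = z^k\bigl(1 - z^{-2}\bigr)^k$ the expansion as $z \to \infty$ is
\[
 z^{-k}\sum_{j \geq 0} \binom{k+j-1}{j} z^{-2j},
\]
supported at exponents $-k - 2j \leq -k$, while from $(z - z^{-1})^k = (-1)^k z^{-k}\bigl(1 - z^2\bigr)^k$ the expansion as $z \to 0$ is
\[
 (-1)^k z^k \sum_{j \geq 0} \binom{k+j-1}{j} z^{2j},
\]
supported at exponents $k + 2j \geq k$. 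The symmetric expansion is the average of these two series. Because $k \geq 1$, the two supports are disjoint, one lying in the negative and the other in the positive half-line, so the averaging produces no cancellation; and since every binomial coefficient $\binom{k+j-1}{j}$ is strictly positive, all resulting coefficients are nonzero. The support is therefore exactly $\{\, t : \lvert t \rvert \geq k,\ t \equiv k \pmod 2 \,\}$, which is the asserted condition $\lvert t_i \rvert \geq \delta_i - 2$ together with $t_i \equiv \delta_i - 2 \equiv \delta_i \pmod 2$ on the high-degree block.

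The main obstacle, though a mild one, is the bookkeeping of parities and signs in the two one-sided expansions and the verification that the average introduces no unexpected vanishing; the disjointness of the supports for $k \geq 1$ (equivalently $\delta \geq 3$) together with the positivity of the binomial coefficients is precisely the point that makes every coefficient nonzero, so this is the step I would state most carefully.
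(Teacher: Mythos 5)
Your proof is correct and takes essentially the same approach as the paper: the paper likewise handles leaves and degree-$2$ vertices directly and, for $\delta_i \geq 3$ with $d = \delta_i - 2$, computes the symmetric expansion of $\bigl(z - z^{-1}\bigr)^{-d}$ as the average of the two one-sided expansions $z^{-d}\sum_k \binom{k-1+d}{k} z^{-2k}$ and $(-1)^d z^d \sum_k \binom{k-1+d}{k} z^{2k}$, with the positivity of the binomial coefficients giving the support $\lvert t_i\rvert \geq d$, $t_i \equiv \delta_i \pmod 2$. Your only additions are making explicit the multiplicativity of the coefficients $\ct_l$ across variables and the disjointness of the two supports, both of which the paper leaves implicit.
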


\begin{proof}
 The expansion for leaves is simply $z_i-z_i^{-1}$, giving the entries $l_i = \pm 1$. The variables corresponding to vertices of degree 2 are absent in \eqref{eq:rat_function_expn}. For a vertex $v_i$ of degree $\delta_{i} \geq 3$, put $d := \delta_{i}-2 \geq 1$. We have the following symmetric expansion:
 \begin{align*}
 2 \cdot \se \bigl(z-z^{-1}\bigr)^{-d} &{}=
 \underset{z \to \infty}{\expn} \frac{z^{-d}}{\bigl(1-z^{-2}\bigr)^{d}}
 + \underset{z \to 0}{\expn} \frac{z^{d}}{\bigl(z^2-1\bigr)^{d}}\\
 &{}=
 z^{-d} \sum_{k=0}^\infty \binom{k-1+d}{k}z^{-2k} + (-1)^d z^{d} \sum_{k=0}^\infty \binom{k-1+d}{k} z^{2k}\\
 &{}=
 \bigl(z^{-d} + d z^{-d-2} + \cdots\bigr) + (-1)^{d} \bigl(z^{d} + d z^{d+2} +\cdots\bigr).
 \end{align*}
 It follows that the corresponding entry $t_i$ must have the same parity as $\delta_i$ and $\lvert t_i \rvert \geq d = \delta_i-2$.
\end{proof}

\subsection{Cancellations}\label{ss:cancel}

By the previous section, the $q$-series $\Zhat_b(Y,q)$ can then be expressed as a sum over $\LCt_b$,
\begin{equation*}
 \Zhat_b(Y,q) = q^{\frac{-3s - \Tr(M)}{4}} \sum_{l \in \LCt_b} \ct_l q^{\frac{-l^2}{4}}.
\end{equation*}
The $q$-exponents of $\Zhat_b(q)$ are therefore given by
$\bigl( -3s - \Tr(M) -l^2\bigr)/4$ for those $l \in \LCt_b$ whose contribution does not cancel out in the sum above, in other words,
\begin{equation}\label{eq:cancel}
 c_l:= \sum_{\substack{l' \in \LCt_b\\l'^2 = l^2}} \ct_{l'} \neq 0.
\end{equation}
However, $c_l$ can vanish even if there are nonzero terms $\tilde{c}_{l'}$ in \eqref{eq:cancel}. We refer to this phenomenon as `cancellations' -- see Examples \ref{ex:cancel_H} and \ref{ex:cancel_seifert}.
Motivated by this, we define
\[
 \LC_b := \{l \mid l \in \LCt_b;\, l \text{ satisfies \eqref{eq:cancel}} \} \subseteq \LCt_b.
\]
It follows that $\Delta_b$ is determined by minimizing $-l^2$ over the set $\LC_b$. We refer to the elements $l$ in $\LC_b$ for which $-l^2$ is minimal, as \emph{minimizing vectors}.
\begin{Lemma}\label{lm:delta_minimizer}
 \begin{equation*}
 \Delta_b(Y) = \frac14 \Bigl(-3s - \Tr(M) + \min_{l \in \LC_b} \bigl\{ -l^2 \bigr\}\Bigr).
 \end{equation*}
\end{Lemma}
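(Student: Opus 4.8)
The plan is to read the claim off directly from the expansion of $\Zhat_b(q)$ as a sum over $\LCt_b$ established in the previous subsection, treating it essentially as a bookkeeping statement about which powers of $q$ survive.

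First I would start from the displayed expression
\[
 \Zhat_b(Y,q) = q^{\frac{-3s-\Tr(M)}{4}}\sum_{l \in \LCt_b}\ct_l\, q^{-l^2/4}
\]
and group its terms according to the value of the quadratic form. Two indices $l, l' \in \LCt_b$ contribute to the same power of $q$ precisely when $l^2 = l'^2$, so collecting terms rewrites the sum as $\sum_n \bigl(\sum_{l^2 = n}\ct_l\bigr) q^{(-3s-\Tr(M)-n)/4}$, where $n$ runs over the set of attained values $\{l^2 : l \in \LCt_b\}$. By the definition \eqref{eq:cancel}, the coefficient of the $q$-exponent attached to $l$ is exactly $c_l$, so that exponent genuinely occurs in $\Zhat_b(q)$ if and only if $c_l \neq 0$, i.e.\ if and only if $l \in \LC_b$.

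Next, since $\Delta_b$ is by definition the smallest exponent occurring with nonzero coefficient, it equals the minimum of $(-3s - \Tr(M) - l^2)/4$ over $l \in \LC_b$. The shift $(-3s-\Tr(M))/4$ is independent of $l$, so this minimum is $\frac14\bigl(-3s - \Tr(M) + \min_{l \in \LC_b}\{-l^2\}\bigr)$, which is the asserted identity. I would then confirm that the minimum is attained: the form $l \mapsto -l^2$ is positive definite (Section~\ref{ss:plumbing}), while $\LC_b$ lies in the discrete coset $-(2\ZZ^s+b)$, so $-l^2 \to \infty$ as $\lvert l \rvert \to \infty$ and the infimum over any nonempty subset is realized. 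In the degenerate case $\LC_b = \emptyset$ we have $\Zhat_b(q) = 0$, hence $\Delta_b = \infty$, matching the convention $\min \emptyset = \infty$.

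The step requiring the most care is the passage from $\LCt_b$ to $\LC_b$: a priori one might expect $\Delta_b$ to be governed by minimizing $-l^2$ over all of $\LCt_b$, but cancellations among the coefficients $\ct_l$ sharing a common value of $l^2$ (the phenomenon recorded in \eqref{eq:cancel}) can annihilate the term with the smallest $-l^2$. The content of the lemma is exactly that, once these cancellations are absorbed by restricting to $\LC_b$, the minimal $q$-exponent is still given by a clean minimization of the quadratic form; the examples referenced in Section~\ref{ss:cancel} show that this restriction is genuinely necessary.
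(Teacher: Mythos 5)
Your proposal is correct and is essentially the paper's own argument: the paper derives the lemma directly from the expansion $\Zhat_b(Y,q) = q^{\frac{-3s-\Tr(M)}{4}}\sum_{l \in \LCt_b}\ct_l\, q^{-l^2/4}$ in Section~\ref{ss:cancel}, with $\LC_b$ defined precisely so that surviving exponents correspond to $c_l \neq 0$ as in~\eqref{eq:cancel}. Your added checks (attainment of the minimum via positive definiteness of $l \mapsto -l^2$ on the discrete coset, and the $\LC_b = \emptyset$ case giving $\Delta_b = \infty$) are points the paper leaves implicit, and they are handled correctly.
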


\section{\texorpdfstring{$\Delta$}{Delta} for Seifert manifolds}

In this section, we will prove Theorem~\ref{thm:seifert} which gives an explicit formula for $\Delta_{\can}(Y)$ of a~Seifert manifold $Y$ equipped with the canonical $\spc$ structure $\can$.

\subsection{Seifert manifolds}
 Seifert manifold $Y = M(b_0;(a_1, \omega_1), \dots, (a_n, \omega_n))$, fibered over $S^2$, is given by an integer $b_0$ and tuples of integers $0<\omega_i<a_i$ for $1 \leq i \leq n$, ${\rm gcd}(\omega_i,a_i)=1$. It can be represented by a~star-shaped plumbing as shown in Figure~\ref{fig:seifert}. The graph consists of a central vertex with the decoration $-b_0$ and $n$ `strings'. The decorations \smash{$-b_{j_1},-b_{j_2},\dots, -b_{j_{s_j}}$} on the $j$-th string are determined by the Hirzebruch--Jung continued fraction
 \[
 \frac{a_j}{\omega_j} = \bigl[b_{j_1}, \dots, b_{j_{s_j}}\bigr] =b_{j_1} - \cfrac{1}{b_{j_2} - \cfrac{1}{\ddots - \cfrac{1}{b_{j_{s_j}}}}} .
 \]

\begin{figure}
 \centering
 \includegraphics[scale=1.1]{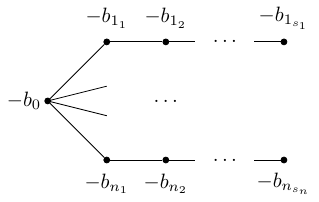}
 \caption{Plumbing graph of a Seifert manifold.}
 \label{fig:seifert}
\end{figure}

The intersection form $M$ is negative definite if and only if the orbifold Euler number
${\rm e} = -b_0 + \sum_i \omega_i/a_i$
is negative (see~\cite[Theorem~5.2.]{Neumann1978}).

We have the following formula for $\gamma(Y)$ of a Seifert manifold $Y$~\cite[p.~296]{NeNi02}, a consequence of Proposition~\ref{base-k2-s}:
 \begin{equation}\label{eq:formula_gamma}
 \gamma(Y) = \frac{1}{{\rm e}}\Biggl(2-n + \sum_{i=1}^n \frac{1}{a_i}\Biggr)^2 + {\rm e} + 5 + 12 \sum_{i=1}^n \ds(\omega_i, a_i).
 \end{equation}
Here $\ds$ denotes the Dedekind sum.

\subsection[Delta\_{can} of Seifert manifolds]{$\boldsymbol{\Delta_{\can}}$ of Seifert manifolds}

\begin{Theorem}\label{thm:seifert}
Let $Y = M(b_0;(a_1, \omega_1), \dots, (a_n, \omega_n))$ be a negative definite Seifert manifold. Then $\Delta_{\can}$ of the canonical $\spc$ structure satisfies
\begin{equation}\label{eq:seifert}
 \Delta_{\can} = -\frac{\gamma(Y)}{4} + \frac{1}{2}.
\end{equation}
If $Y$ is not a lens space, then $\Delta_{\can}$ is minimal among all $\Delta_b$, $b \in \spc(Y)$.
\end{Theorem}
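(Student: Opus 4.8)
The plan is to use Lemma~\ref{lm:delta_minimizer}, which reduces the computation of $\Delta_{\can}$ to minimizing $-l^2$ over the set $\LC_{\can}$. By that lemma,
\begin{gather*}
 \Delta_{\can} = \frac14\Bigl(-3s - \Tr(M) + \min_{l \in \LC_{\can}}\bigl\{-l^2\bigr\}\Bigr),
\end{gather*}
so comparing with the target formula \eqref{eq:seifert} and using the expression $\gamma(Y) = 3s + \Tr(M) + 2 + (2u-\delta)^2$ from Proposition~\ref{base-k2-s}, I would first check that proving \eqref{eq:seifert} is equivalent to showing $\min_{l \in \LC_{\can}}\{-l^2\} = -(2u-\delta)^2$. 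Since $l \mapsto -l^2$ is positive definite and $-(2u-\delta)^2$ is exactly the value attained at $l = \pm(2u-\delta)$, the core assertion is that the vector $2u-\delta$ lies in $\LC_{\can}$ and is (up to sign) the unique minimizer there. This matches the proof idea announced in the introduction.

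The first main step is to describe $\LCt_{\can}$ concretely for a star-shaped graph. Using Lemma~\ref{description-c}, the leaves contribute entries $\pm 1$, degree-2 vertices contribute $0$, and the only vertex of degree $\geq 3$ is the central vertex with degree $n$, whose entry $t$ satisfies $t \equiv n \pmod 2$ and $\lvert t\rvert \geq n-2$. The condition $l \in -(2\ZZ^s + (2u-\delta))$, i.e.\ $l$ represents $\can$ up to sign, further constrains these vectors; I would verify that $2u-\delta$ itself has the correct entries — value $1$ at each leaf, $0$ at each degree-2 vertex, and $2-n$ at the central vertex — so it sits in $\LCt_{\can}$ with the minimal possible central entry in absolute value. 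The second step is to show that among all $l \in \LCt_{\can}$, the quadratic form $-l^2 = -l^T M^{-1} l$ is minimized precisely when every entry is as small as possible in the appropriate sense, isolating $\pm(2u-\delta)$; here I would exploit that $M^{-1}$ has all-negative entries for a negative definite tree plumbing (so that increasing $\lvert t\rvert$ or flipping signs to misalign the leaf entries strictly increases $-l^2$), together with positive definiteness of $-l^2$ to rule out any competing vector of equal norm.

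The third and most delicate step is to pass from $\LCt_{\can}$ to $\LC_{\can}$, i.e.\ to rule out the cancellations of Section~\ref{ss:cancel}: I must confirm that the minimizing vector $2u-\delta$ survives, meaning its coefficient $c_l$ in \eqref{eq:cancel} is nonzero. For this I would argue that $\pm(2u-\delta)$ are the \emph{only} vectors in $\LCt_{\can}$ attaining the minimal value of $-l^2$, so the sum in \eqref{eq:cancel} has exactly the two terms $\tilde c_{2u-\delta}$ and $\tilde c_{-(2u-\delta)}$; by the symmetry $\tilde c_{-l} = \tilde c_l$ of the expansion \eqref{eq:rat_function_expn} these have the same sign and hence cannot cancel, giving $2u-\delta \in \LC_{\can}$ and establishing \eqref{eq:seifert}. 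I expect this uniqueness-of-the-minimizer claim to be the main obstacle, since it requires a careful analysis of the quadratic form on the constrained lattice rather than a routine computation.

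For the final sentence, proving minimality of $\Delta_{\can}$ among all $\Delta_b$ when $Y$ is not a lens space, I would compare the minimization problems across spin$^{\rm c}$ structures. The key point is that every $\LCt_b$ contains vectors with the same leaf entries $\pm 1$ and the same central-vertex parity constraint, so the achievable values of $-l^2$ are governed by the distance from the relevant coset $-(2\ZZ^s + b)$ to the origin under the positive definite form $-l^2$; the canonical structure is the one whose representative $2u-\delta$ sits closest, giving the smallest minimum. The exclusion of lens spaces ($n \leq 2$) is exactly the regime where the central vertex has degree $\leq 2$ and this extremality can fail or become non-strict, so I would treat that case separately and verify that for $n \geq 3$ the canonical representative is strictly optimal, using the formula \eqref{eq:formula_gamma} if an explicit numerical comparison is needed.
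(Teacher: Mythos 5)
Your outline coincides with the paper's strategy (reduce via Lemma~\ref{lm:delta_minimizer}, identify $\pm(2u-\delta)$ as the unique minimizers over the set of Lemma~\ref{description-c}, rule out cancellation via $\tilde c_{l} = \tilde c_{-l}$, treat lens spaces separately), but the step you yourself flag as the main obstacle contains a genuine gap, and your proposed tool for it does not work. Entrywise negativity of $M^{-1}$ does not imply that ``increasing $\lvert t\rvert$ or flipping signs to misalign the leaf entries strictly increases $-l^2$''; in fact the direction of your heuristic is backwards: the minimizer is $\delta-2u=(-1,\dots,-1,0,\dots,0,n-2)$, whose leaf entries have the \emph{opposite} sign to the central entry, precisely so as to exploit the negative cross terms $2l_it\,A/a_i$ in the form. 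Because these cross terms pull against the diagonal term $t^2A$, mere positivity of the entries of $-M^{-1}$ cannot decide the competition; one needs the quantitative structure of the star-shaped case. The paper gets this from Neumann's splice formula (Theorem~\ref{theorem:neumann}), writing $-l^2$ explicitly as in \eqref{eq:quadratic} and proving three inequalities that use $n\geq 3$ and $a_i\geq 2$: the bound $\partial_t\bigl(-l^2\bigr)\geq 2t-n>0$ for $t\geq n$, the parity comparison of $t=n-2$ against $t=n$ (which needs $n-1\geq -\sum_i l_i/a_i$), and the single-leaf-flip comparison $\bigl(l^+\bigr)^2-(l^-)^2\geq \frac{2A}{a_j|H|}\bigl(2(n-2)-(n-1)\bigr)\geq 0$. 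Without these (or equivalent) estimates your uniqueness claim is unproven, and your symmetry argument $\tilde c_{-l}=\tilde c_l$ (which is correct, and matches the paper's explicit value $(-1)^n/2$) has nothing to apply to.

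There is a second, smaller gap in your minimality step: arguing that ``the canonical structure is the one whose representative sits closest'' to the origin is exactly the assertion to be proved, not an available input. The paper sidesteps this by running the minimization over the \emph{whole} set $\LCt$ rather than over the single coset $\LCt_{\can}$: once $\pm(2u-\delta)$ are shown to be the unique minimizers of $-l^2$ on all of $\LCt$, both conclusions follow at once, since for any $b$ one has $\min_{l\in\LC_b}\bigl\{-l^2\bigr\}\geq \min_{l\in\LCt}\bigl\{-l^2\bigr\}=-(2u-\delta)^2$, whence $\Delta_b\geq \Delta_{\can}$ with equality forced into the coset of $\can$ (or its conjugate, which has the same $\Delta$). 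If you carry out your step~2 globally on $\LCt$ instead of on $\LCt_{\can}$, your steps~3 and~4 then go through essentially as you describe, including the observation that when $\can$ is spin the two contributions add rather than cancel.
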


\begin{proof}
We will treat the lens spaces separately in Section~\ref{ssec:lens}, so we may assume that $n \geq 3$ and $a_i \geq 2$ for each $i$. Denote $A = \prod_{i=1}^n a_i$.

We will first show that over the set $\LCt$ from Lemma~\ref{description-c}, $-l^2$ is minimized exactly by the vectors $\pm(2u-\delta)$. From that, it will follow that $2u-\delta \in \LC_{\can}$ is a true minimizing vector in the sense of Section~\ref{ss:cancel}. The formula \eqref{eq:seifert} for $\Delta_{\can}$ then follows from Proposition~\ref{base-k2-s}.

The set $\LCt$ consists of the vectors
\[
 l = (l_1, l_2, \dots, l_n, 0, \dots, 0, t),
\]
where $l_i = \pm 1$ and $t \equiv n \pmod{2}$ and $\lvert t \rvert \geq n-2$ by Lemma~\ref{description-c}. The quadratic form $l^2$ can be expressed in terms of the Seifert data using Neumann's Theorem~\ref{theorem:neumann}
below (see also~\cite[p.~19]{GKS23}):
\begin{equation}\label{eq:quadratic}
 -l^2 = \frac{1}{|H|} \Biggl(t^2 A + 2 \sum_{i=1}^n l_i t\frac{A}{a_i} +
 2 \sum_{i > j}^n l_i l_j \frac{A}{a_ia_j} \Biggr) - \sum_{i=1}^n M^{-1}_{ii}.
\end{equation}

By the symmetry $l^2 = (-l)^2$, we may assume that $t \geq n-2$. Taking the derivative with respect to $t$, we obtain
\[
\frac{1}{A |H|} \partial_t \bigl(-l^2\bigr) = 2t + 2\sum_{i=1}^n \frac{l_i}{a_i} \geq 2t - n,
\]
which implies that $\partial_t\bigl(-l^2\bigr) >0$ for $t \geq n$. This means that \eqref{eq:quadratic} is nondecreasing on the interval $[n,\infty)$. As $t$ and $n$ have the same parity, we need to additionally check that \eqref{eq:quadratic} is smaller for $t = n-2$ than it is for $t = n$:
\[
n^2+2n \sum_{i=1}^n l_i \frac{A}{a_i} \geq (n-2)^2+2(n-2)\sum_{i=1}^n l_i \frac{A}{a_i}.
\]
This follows from
\[n-1\geq \frac{n}{2} \geq -\sum_{i=1}^n l_i \frac{A}{a_i}.
\]

Similarly, pick $j \in 1,\dots, s_1.$ Let $l^+$, $l^-$ be two vectors with $t = n-2$ that only differ in the $j$-th coordinate, say $l_j^+=1$ and $l^-_j=-1$. We have
\begin{align*}
 \bigl(l^+\bigr)^2 - (l^-)^2 & = \frac{2A}{a_j |H|} \biggl(2(n-2) + 2\sum_{i \neq j} \frac{l_i}{a_i}\biggr) \\
 & \geq \frac{2A}{a_j |H|} (2(n-2) - (n-1)) \geq 0.
\end{align*}
It follows that
\[
 l = (-1,\dots, -1, 0, \dots, 0, n-2) = \delta - 2u
\]
minimizes \eqref{eq:quadratic}. The argument also implies that the only other vector giving the same value of~$l^2$ is $-l=2u-\delta$ which represents the canonical $\spc$ structure $\can$. These two vectors have the same coefficients in the expansion of~\eqref{eq:rat_function},
\[
\tilde{c}_l = \tilde{c}_{-l} = \frac{(-1)^n}{2}.
\]
Therefore, even in the case that $\pm l$ belong to the same $\spc$ structure, i.e., $\can$ is $\spin$, their contributions do not cancel in $\Zhat_{\can}(q)$ and consequently $-l \in \LC_{\can}$.
Thus the minimum of $-l^2$ over $\LC_{\can}$ is given by $-(2u-\delta)^2$ and we have
 \begin{equation*}
 \Delta_{\can} = \frac{-3s - \Tr(M)}{4} - \frac{(2u-\delta)^2}{4} = - \frac{\gamma(Y)}{4} + \frac{1}{2}.
 \end{equation*}
 The last equality follows the formula for $\gamma(Y)$ in Proposition~\ref{base-k2-s}.
\end{proof}

\subsection{Lens spaces}\label{ssec:lens}

In this section, we compute $\Delta_b$ of lens spaces. Let $Y= L(p,r) $ be a lens space with $p>r>0$, ${\rm gcd}(p,r)=1$. Denote by $g$ a generator of $H_1(Y,\ZZ) \cong \ZZ/p\ZZ$ (written multiplicatively) and let $\can$ be the canonical $\spc$ structure. All $\spc$ structures are of the form $g^i \can$ for $i=0, \dots, p-1$, i.e., the images of $\can$ under the natural action of $H_1(Y)$ on $\spc(Y)$.
 We have the following formula for \smash{$\Zhat_b(q)$} of lens spaces from~\cite[Section~4.2.]{GKS23}:
 \begin{equation*}
 \sum_{i=0}^{p-1} \Zhat_{g^i \can}(q)g^i
 = q^{3\ds(r,p)} \bigl(\bigl(g^{-r-1}+1\bigr) q^{\frac{1}{2p}} - \bigl(g^{-r}+g^{-1}\bigr) q^{-\frac{1}{2p}}\bigr).
 \end{equation*}
 Here $\ds(r,p)$ denotes the Dedekind sum. From the formula, we read off the four nonzero $\Delta_b$ invariants.
 \begin{equation}\label{eq:delta_lens}
 \Delta_{g^{-r-1}\can} = \Delta_{\can} = 3\ds(r,p) + \frac{1}{2p}, \qquad
 \Delta_{g^{-r}\can} = \Delta_{g^{-1}\can} = 3\ds(r,p) - \frac{1}{2p}.
 \end{equation}
$\gamma(Y)$ can be described using Dedekind sums in the following manner~\cite[p.~304]{NeNi02}:
\[
 \gamma(Y) = 2 - \frac{2}{p} - 12\ds(p,r).
\]
Comparing with \eqref{eq:delta_lens}, we obtain that $\Delta_{\can}$ satisfies the same formula as for the other Seifert manifolds:
\begin{equation*}
 \Delta_{\can} = -\frac{\gamma(Y)}{4} + \frac12.
\end{equation*}

Note that \eqref{eq:delta_lens} shows that for lens spaces, $\Delta_{\can}$ is not minimal among all $\Delta_b$. See also Example~\ref{ex:cancel_H}.

\begin{Remark}
 Theorem~\ref{thm:seifert} and Section~\ref{ssec:lens}
imply that $\Zhat_{\can}(q)$ is always non-zero for negative definite Seifert manifolds fibered over $S^2$. It can still be a single monomial, as is illustrated in Example~\ref{ex:cancel_seifert}.
\end{Remark}

\begin{Remark}
 We originally proved Theorem~\ref{thm:seifert} using the reduction theorem~\cite[Theorem~4.2]{GKS23} which may be used to compute all $q$-exponents of $\Zhat_b(q)$ of Seifert manifolds. Later, we found a simpler argument presented here, which focuses on the smallest exponent. It also emphasizes the role of the vector $2u-\delta$, making the presence of the invariant $\gamma(Y)$ more transparent.
\end{Remark}

\begin{Remark}
 The series $\Zhat_b(q)$ often vanishes (see \cite[Corollary~3.7]{GKS23})
 in which case the $\Delta_b$ invariants are infinite. In~\cite{GPPV20}, the authors conjectured, based on physics considerations, the existence of a categorification of $\Zhat_b(q)$, i.e., a doubly-graded cohomology theory \smash{$\mathcal{H}_b^{i,j}(Y)$} whose graded Euler characteristic is $\Zhat_b(q)$,
 \[
 \Zhat_b(q) = 2^{-s} q^{\Delta_b} \sum_{i, j \in \ZZ} q^i (-1)^j \dim \mathcal{H}_b^{i,j}(Y).
 \]
 Smaller $q$-exponents than $\Delta_b$ could appear in the corresponding two-variable generating series $q^{\Delta_b}\sum_{i,j} q^i t^j \dim \mathcal{H}_b^{i,j}(Y)$. In~\cite{GPV16}, a Poincar\'e series of this sort was defined for lens spaces $L(p,1)$. Its minimal $q$-power is finite for all $\spc$ structures, in contrast with $\Delta_b$.
\end{Remark}

\section{Beyond Seifert manifolds}\label{s:H_shaped}

In the proof of Theorem~\ref{thm:seifert} giving the formula for $\Delta_{\can}$ of Seifert manifolds, we used a special form \eqref{eq:quadratic} of the quadratic from $l \mapsto -l^2$ following from the fact that Seifert manifolds admit a~star-shaped graph. For more general graphs, we need a generalization of \eqref{eq:quadratic}. This is realized by some properties of \emph{splice diagrams}, which are certain weighted graphs built from plumbing graphs. We will illustrate this method on plumbing graphs with exactly two vertices of degree 3 and no vertices of degree 4 or more. The corresponding splice diagrams are `H-shaped' graphs with 6 vertices, as in Figure~\ref{fig:H_splice}.

In general, there is no uniform choice of minimizing vector as was the vector $2u-\delta$ for $\Delta_{\can}$ of Seifert manifolds. Therefore, we cannot hope for a simple universal formula for $\Delta_{\can}$ as in Theorem~\ref{thm:seifert}. Nevertheless, the minimizing vectors keep a specific form in certain regions given by the relative size of the weights in a splice diagram. On the boundaries of these regions, we may see multiple minimizing vectors.

In principle, one can divide the study into those particular cases. Some of these can be effectively reduced to the case of Seifert manifolds, as we will illustrate in Section~\ref{ss:computation_h_shaped}. The techniques described here can be used for more general plumbings, but the number of cases grows significantly with the complexity of the splice diagram.

\subsection{Splice diagrams}

Following~\cite{NW05a} (see also~\cite{SavelievBook}), given a rational homology sphere $Y$ with negative definite
plumbing graph $\Gamma$, we construct the splice diagram $\Omega$ of $Y$ as follows: $\Omega$ is a tree obtained by replacing
each maximal string in $\Gamma$ (a simple path in $\Gamma$ whose interior is open in $\Gamma$) by a single edge. Thus~$\Omega$ is homeomorphic to $\Gamma$ but has no vertices of degree 2. We identify the vertices of $\Omega$ with the corresponding vertices of $\Gamma$.

At each vertex $v$ of $\Omega$ of degree $\geq 3$, we assign a weight $w_{v\varepsilon}$ on an incident edge $\varepsilon$ as follows. The edge $\varepsilon$ in $\Omega$ corresponds to a string in $\Gamma$ starting in $v$ with some edge $e$. Let $\Gamma_{ve}$ be the subgraph of $\Gamma$ cut off by the edge of $\Gamma$ at $v$ in the direction of $e$, as in the following picture:
$$
\includegraphics[scale=1.1]{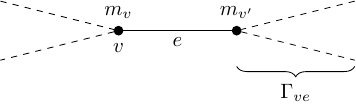}
$$
The corresponding weight $w_{v\varepsilon}$ is given by the determinant of $-M(\Gamma_{ve})$, where $M(\Gamma_{ve})$ denotes the intersection matrix of $\Gamma_{ve}$. We draw the weight $w_{v \varepsilon}$ on the edge $\varepsilon$ near $v$. For example, the following is a plumbing graph and its splice diagram:
$$
\includegraphics[scale=1.1]{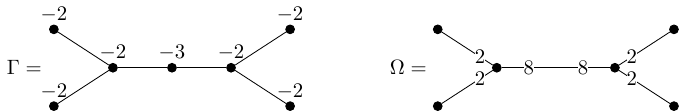}
$$
The splice diagram might depend on the choice of the plumbing graph
(in its class modulo plumbing calculus), however, up to the calculus of the splice diagrams, $\Omega$ is unique (cf.~\cite{Nem22}).
In fact, we can take as a canonical choice the splice diagram associated with the minimal good plumbing graph
(a graph which does not contain any vertices of valency at most 2 with $-1$ decoration).

If $Y(\Gamma)$ is an integer homology sphere, the splice diagram uniquely determines the plumbing graph, see~\cite{NW02}. Although $\Gamma$ and $\Omega$ are essentially equivalent in this case, $\Omega$ is much smaller. More importantly, it is the relative size of the weights of $\Omega$, which directly influences the minimizing vectors. For example, if one of the weights is significantly larger than the others, the minimizing vectors must be of some specific form, see Section~\ref{ss:computation_h_shaped}.

The values of the quadratic form $l \mapsto -l^2=-l^T M^{-1}l$ can be computed from the splice diagram as follows: For two vertices $v$, $v'$ of the splice diagram, consider the shortest path $P$ connecting them. Let $N_{vv'}$ be the product of all weights adjacent to vertices of $P$, but not lying on $P$,\looseness=-1
$$
 \includegraphics[scale=1.1]{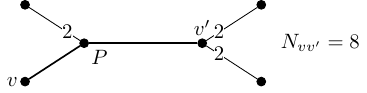}
$$

\begin{Theorem}[{\cite[Theorem~12.2]{NW05a}}]\label{theorem:neumann} With the notation above, we have
 \[
 M^{-1}_{vv'} = -\frac{N_{vv'}}{\det(- M)}.
 \]
\end{Theorem}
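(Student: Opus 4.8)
The plan is to compute the entry $M^{-1}_{vv'}$ directly from its cofactor expansion and to read off the combinatorial meaning of that expansion for a tree. By Cramer's rule, $M^{-1}_{vv'} = (\adj M)_{vv'}/\det M$, and the adjugate entry can be written as a signed sum over permutations,
\[
 (\adj M)_{vv'} = \sum_{\substack{\pi \colon V \to V \\ \pi(v') = v}} \mathrm{sgn}(\pi) \prod_{i \neq v'} M_{i,\pi(i)}.
\]
First I would observe that a term is nonzero only when, for every $i \neq v'$, the index $\pi(i)$ equals $i$ or is a neighbour of $i$ in $\Gamma$, so that $M_{i,\pi(i)} \neq 0$. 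Thus every surviving term corresponds to a permutation whose nontrivial cycles run along edges of $\Gamma$.

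The key step is to exploit that $\Gamma$ is a \emph{tree}. A nontrivial cycle visiting distinct vertices along edges would produce a cycle in $\Gamma$, which is impossible; hence the only nontrivial cycles allowed among the indices $i \neq v'$ are transpositions of adjacent vertices. The sole exception is the cycle through $v'$: since $\pi(v') = v$, this cycle must continue $v = u_0 \to \pi(u_0) \to \cdots$ until it returns to $v'$, and in a tree the unique such route is the shortest path $P = (v = u_0, u_1, \dots, u_k = v')$. Consequently every surviving term forces the single $(k+1)$-cycle $(v', u_0, \dots, u_{k-1})$ along $P$, contributing $\mathrm{sgn} = (-1)^k$ and $\prod_{j} M_{u_j, u_{j+1}} = 1$, while the remaining vertices, which are exactly those of $\Gamma \setminus P$, are permuted within the connected components cut off at the vertices of $P$. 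Summing over the latter yields a product of determinants $\prod_T \det\bigl(M|_T\bigr)$ over the hanging subtrees $T$. These subtrees are precisely the graphs $\Gamma_{ve}$ appearing in the definition of the splice weights, so $\det(-M|_T)$ is the weight on the corresponding edge of $\Omega$ adjacent to $P$ but not on $P$, and their product is $N_{vv'}$.

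It then remains to assemble the signs and relate $\det M$ to $\det(-M)$. Writing $\det(M|_T) = (-1)^{|T|}\det(-M|_T)$ and using $\sum_T |T| = s - (k+1)$ gives $(\adj M)_{vv'} = (-1)^{k}\prod_T \det(M|_T)$, which collapses to $(-1)^{s+1} N_{vv'}$; dividing by $\det M = (-1)^s \det(-M)$ then produces the claimed identity $M^{-1}_{vv'} = -N_{vv'}/\det(-M)$. I expect the main obstacle to be precisely this sign bookkeeping: one must confirm that the label-dependent cofactor sign combines with the $(k+1)$-cycle sign and the per-subtree signs $(-1)^{|T|}$ into the single universal factor $-1$, independently of how the vertices are numbered. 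A secondary point to check carefully is that each hanging component of $\Gamma \setminus P$ is attached along exactly one edge (true because $\Gamma$ is a tree) and corresponds to exactly one splice weight adjacent to $P$, so that no weight is omitted or double-counted. I would sanity-check the final sign on the smallest cases -- a single edge with $N_{vv'}=1$, a path of length two, and a three-pronged star -- before trusting the general count.
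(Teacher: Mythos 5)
Your proof is correct, but note that the paper itself contains no proof of this statement: Theorem~\ref{theorem:neumann} is imported from~\cite[Theorem~12.2]{NW05a}, and the closest the paper comes to an argument is the equivalent cofactor identity~\eqref{eq:app1} in the appendix, justified only by the one-line remark that minors of tree matrices can be interpreted as determinants of subgraphs. Your permutation expansion of the adjugate supplies exactly the missing combinatorics: on a tree the cycle of $\pi$ through $v'$ is forced onto the unique path $P$ from $v$ to $v'$ (a nontrivial graph cycle being impossible), the remaining sum factors as $\det\bigl(M|_{\Gamma\setminus P}\bigr)=\prod_T \det\bigl(M|_T\bigr)$ by block-diagonality over the hanging components, and your sign count $(-1)^k\cdot(-1)^{s-k-1}=(-1)^{s-1}$ against $\det M=(-1)^s\det(-M)$ does collapse to the universal factor $-1$, independently of the vertex labelling -- the small cases you propose (a single edge, a $3$-path, a three-pronged star) all confirm it. Two points you flag are worth making fully explicit: the identification $\prod_T\det\bigl(-M|_T\bigr)=N_{vv'}$ uses that $v$, $v'$ are vertices of the splice diagram $\Omega$ (degree $\neq 2$), so each component of $\Gamma\setminus P$ is attached along a single edge $e$ at a vertex $u$ of $P$ of degree $\geq 3$ and equals some $\Gamma_{ue}$, hence contributes exactly one weight $w_{u\varepsilon}$ adjacent to $P$ but not on it, while interior degree-two vertices of $P$ and leaf endpoints contribute nothing; and the degenerate case $v=v'$ (where $k=0$ and $v'$ is a fixed point of $\pi$) goes through verbatim, recovering the diagonal entries $M^{-1}_{vv}$, which the paper also uses when expressing $-l^2$ through splice weights. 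Compared with the paper's route, your argument buys a self-contained, citation-free verification -- including the sign, which is the only delicate point -- at the cost of length; the paper's citation buys brevity and defers the bookkeeping to Neumann--Wahl.
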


Recall that the vectors $l \in \Z^s$ that contribute to $\Zhat_b(q)$ lie in the set $\LCt$ from Lemma~\ref{description-c}. If~${l \in \LCt}$, we have $l_v=0$ if $v$ is of degree 2 and $l_v^2 = (\pm 1)^2=1$ if $v$ is of degree 1. We obtain the following expression for $-l^2$ generalizing \eqref{eq:quadratic}:
\begin{equation*}
 -l^2 = - \biggl(\sum_{\substack{v \neq v'\\ \delta_v,\delta_{v'} \neq 2}} M^{-1}_{vv'} l_v l_{v'} + \sum_{\delta_v \geq 3} M^{-1}_{vv} l_v^2 + \sum_{\delta_v =1} M^{-1}_{vv} \biggr).
\end{equation*}

Note that the coefficients in the first and second sum can be expressed using the weights of the splice diagram $\Omega$, up to the multiplication by $\det(- M)$. On the other hand, the third sum is not expressed in terms of weights of $\Omega$ in a simple way,\footnote{They are the weights at the leaves of the \emph{maximal} splice diagram, see~\cite[p.~743]{NW05a}.} but it is independent of $l$ (whenever $l \in \LCt$), so it does not influence which vectors $l \in \LCt$ have the minimal value of $-l^2$.

\subsection[Delta for homology spheres with H-shaped splice diagrams]{$\boldsymbol{\Delta}$ for homology spheres with H-shaped splice diagrams} \label{ss:computation_h_shaped}

We now consider a plumbing graph $\Gamma$ with exactly two vertices of degree 3 and no vertices of higher degree, e.g., the graph in Figure~\ref{fig:plumbing_from_splice}. For simplicity, we assume that the associated plumbed manifold $Y$ is a homology sphere. The associated splice diagram $\Omega$ is an `H-shaped' graph with six vertices, see Figure~\ref{fig:H_splice}. Its six weights are denoted $a_1$, $a_2$, $a_3$ and $a'_1$, $a'_2$, $a'_3$. They are pairwise coprime integers, which we further assume to be $\geq 2$. In this case, $Y$ can be realized as splicing of Brieskorn spheres $\Sigma(a_1,a_2,a_3)$ and $\Sigma\bigl(a'_1,a'_2,a'_3\bigr)$ along their third singular fibers.

We consider the projection $\ZZ^s \to \ZZ^6$, denoted by $l \mapsto \bar{l}$, which removes the components corresponding to the vertices of degree 2. We order the components of $\bar{l} = \bigl(x_1,x_2,x_3,x'_1,x'_2,x'_3\bigr)$ as in Figure~\ref{fig:H_splice} and keep the ordering throughout this section.

\begin{figure}[t]
 \centering \includegraphics[scale=1.1]{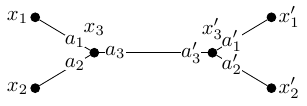}
 \caption{H-shaped splice diagram $\Omega$.}
 \label{fig:H_splice}
\end{figure}

The quadratic form $l \mapsto -l^2$, restricted to the set $\LCt$ from Lemma~\ref{description-c}, can be expressed in terms of the weights of $\Omega$ as
\begin{align}
 -l^2 ={}& x_3^2 a_1 a_2 a_3 +
 2
 x_1 x_3 a_2 a_3 +
 2
 x_2 x_3 a_1 a_3 +
 2
 x_1 x_2 a_3 + (\cdots)' \nonumber\\
 &{}{+}\, 2x_1 x'_2 a_2 a'_1 +
 2x_1' x_3 a_1 a_2 a'_2 +
 2x'_2 x_3 a_1 a_2 a'_1 + (\cdots)' \nonumber \\
 &{}{+}\, 2x_1 x'_1 a_2 a'_2 +
 2 x_2 x'_2 a_1 a'_1 +
 2 x_3x'_3a_1a_2a'_1a'_2 + C.\label{eq:form_H_shaped}
\end{align}

Here $(\cdots)'$ means that we repeat the terms on each line with the usual and dashed variables reversed. $C$ is constant on the set $\LCt$ so it does not influence the minimizing vectors for $\Delta_{\can}$.

The general strategy to identify $\Delta_{\can}$ can be described as follows:\@ We know that $x_1,x_2,x'_1,x'_2 \in \{\pm1\}$ because $l \in \LCt$. For each of the $2^4$ possibilities of the signs, the form above reduces to a~quadratic form in variables $x_3$, and $x_3'$. We can then minimize these forms over odd integers, using standard optimization methods. Finally, we must check that the resulting vectors do not cancel out, as in Section~\ref{ss:cancel}, so they are true minimizing vectors.

Clearly, the form of minimizing vectors depends on the relative size of the coefficients~$a_i$,~$a'_i$. We describe in a greater detail the case when $a_3$ is very large compared to other $a_i$ and $a'_i$. This allows us to effectively reduce the minimizing problem to the Seifert case. If $a_3 \gg a_1,a_2,a'_1,a'_2,a'_3$, the substantial terms are those containing $a_3$
\begin{equation}\label{eq:half_graph}
 x^2_3 a_1 a_2 a_3 + 2x_1 x_3 a_2 a_3 + 2x_2 x_3 a_1 a_3 + 2x_1 x_2 a_3.
\end{equation}

Any vector $l \in \LCt$ with the minimal value of $-l^2$ also minimizes the expression \eqref{eq:half_graph}. As this is (almost) a quadratic form of a star-shaped graph, we can repeat the first part of the argument in the proof of Theorem~\ref{thm:seifert}. Namely, assuming that $a_i \geq 2$ and (without a loss of generality) $x_3 \geq 1$ and taking the $x_3$-derivative,{\samepage
\begin{align*}
\partial_{x_3}( \eqref{eq:half_graph}) = 2 (x_3 a_1 a_2 a_3 + x_1 a_2 a_3 + x_2 a_1 a_3)
 \geq 2 a_3( (a_1-1)(a_2-1) +1)) > 0,
\end{align*}
we obtain that $x_3 = \pm 1$.}

We are left with a quadratic function of a single variable $x'_3$ depending on the choice of signs $x_1,x_2,x_3,x'_1,x'_2 \in \{\pm 1\}$. By taking the $x'_3$-derivative of \eqref{eq:form_H_shaped}, we find that the minimum of $x'_3$ over $\R$ is
\begin{align}
 \min_{\R} x_3' &= -\frac{x'_1 a'_2 a'_3 + x'_2 a'_1 a'_3 + x_1 a'_1 a'_2 a_2 + x_2 a'_1 a'_2 a_1 + x_3 a_1 a_2 a'_1 a'_2}{a'_1 a'_2 a'_3} \nonumber\\
 &=-\biggl(\frac{x_1'}{a'_1}+\frac{x_2'}{a'_2}+\frac{x_1a_2}{a'_3}+\frac{x_2a_1}{a'_3}+\frac{x_3a_1 a_2}{a'_3}\biggr).\label{eq:x_3_prime}
\end{align}
By Lemma~\ref{description-c}, the actual value of $x'_3$ is the odd integer that is the closest to \eqref{eq:x_3_prime}, up to a minor caveat that some cancellations may happen for the resulting vector (see Section~\ref{ss:cancel}).
Taking $a_1=a_2 \gg a'_3$ shows that there are graphs for which the absolute value of the component $x'_3$ in the minimizing vector can be arbitrarily large.\footnote{Again, assuming that there are no cancellations. It should be easy to construct an explicit family of H-shaped graphs where this does not happen.} This is in contrast with the case of Seifert manifolds, where the components of the minimizing vector $2u-\delta$ are numbers bounded by the number of singular fibers.

In a similar way, we could analyze other cases of the relative size of the weights, giving (at least approximate) `explicit formulas' for $\Delta_{\can}$. However, in practice, it is easier to use a~computer to search for the minimizing vectors. One can use a modified version of the code in~\cite{petercode}, as we do in certain cases.
We illustrate the variability of possible minimizing vectors and possible values of $\Delta_{\can}$ on several examples in this and the following section.

\begin{Example}\label{ex:original}
Consider the integral homology sphere $Y$ associated with the splice diagram
$$
 \includegraphics{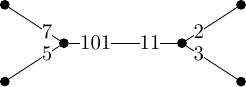}
$$

Then $\Delta(Y) = \frac{3045}{1000}$, with minimizing vectors
$\pm(1,1,-1, -1,-1,3)$. The vector $(1,1,-1, -1,\allowbreak -1,3)$ has $x_3' = 3$ which is consistent with \eqref{eq:x_3_prime} with value \smash{$\frac{193}{66} \doteq 2.92$}.
$\Delta(Y)$ is strictly smaller than \smash{$-\frac{\gamma(Y)}{4} + \frac{1}{2} = \frac{3885}{1000}$} given by the vector $2u-\delta=(-1,-1,1,-1,-1,1)$. The corresponding plumbing graph on $28$ vertices is shown in Figure~\ref{fig:plumbing_from_splice}.

\begin{figure}[ht]
 \centering \includegraphics[scale=1.1]{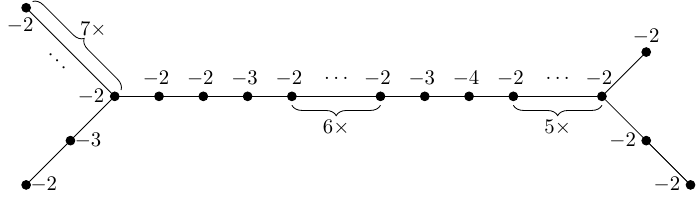}%
 \caption{The plumbing graph associated to the splice diagram in Example~\ref{ex:original}.}
 \label{fig:plumbing_from_splice}
\end{figure}
\end{Example}

\begin{Example} Consider the manifold $Y$ given by the following plumbing:
$$
 \includegraphics[scale=1.1]{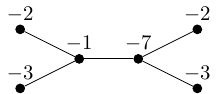}
$$
Then $Y$ is an integral homology sphere with $\Delta(Y) = \frac{1}{2}$ and the minimizing vectors are $\{\pm v, \pm w \}$, where $v=(1, 1, -1, -1, 1, 1)$, $w = (1, 1, -1, 1, -1, 1)$. Again, \smash{$\Delta(Y) < - \frac{\gamma(Y)}{4} + \frac{1}{2} = \frac{5}{2}$}. Note that this plumbing corresponds to the following splice diagram:
$$
 \includegraphics[scale=1.1]{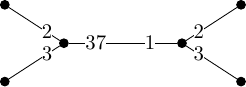}
$$
\end{Example}

\section{Upper bounds and cancellations}

In this section, we focus on upper bounds for $\Delta_b$. In principle, bounding $\Delta_b$ from above should be easy~-- the (shifted) norm $\bigl(-3s - \Tr(M) -l^2\bigr)/4$ of any element $l \in \mathcal{C}_b$ gives an upper bound.

However, when finding elements of $\LC_b$, we are facing two issues. First of all, $\LCt_b$ can be empty, e.g., for some $\spc$ structures on lens spaces, giving $\Zhat_b(q)=0$ and $\Delta_b = \infty$. Secondly, even if~$\LCt_b$ is non-empty, there may be drastic cancellations of the coefficients, as we will illustrate on the Seifert manifold in Example~\ref{ex:cancel_seifert}.

This prevents us from establishing general results in this direction. In particular, the vector $2u-\delta$ does not always give an upper bound for $\Delta_{\can}$, unlike for Seifert manifolds (where it was optimal), see Example~\ref{ex:cancel_H}.

We believe that those cancellations are rather special, being related to some additional symmetry of the plumbing graph. In particular, it would be rather surprising if they occurred for all $\spc$ structures at once. Therefore, we expect the following conjecture.

\begin{Conjecture}
For a negative-definite plumbed manifold $Y$, we have
 \[\min_{b \in \spc{Y}} \Delta_b \leq -\frac{\gamma(Y)}{4}+\frac12.\]
\end{Conjecture}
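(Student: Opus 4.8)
The plan is to reformulate the conjecture as a single non-vanishing statement about the joint $\spc$-graded series, and then to attack that non-vanishing directly. Combining Lemma~\ref{lm:delta_minimizer} with Proposition~\ref{base-k2-s}, set $B := -(2u-\delta)^2 > 0$, so that the target bound reads $-\gamma(Y)/4 + 1/2 = \frac14\bigl(-3s - \Tr(M) + B\bigr)$ and the inequality $\Delta_b \leq -\gamma(Y)/4 + 1/2$ is equivalent to the existence of a surviving vector $l \in \LC_b$ with $-l^2 \leq B$. First I would record that $\delta - 2u \in \LCt_{\can}$ with $-(\delta-2u)^2 = B$ exactly; hence for $b = \can$ the bound holds \emph{unless} the contribution of $\delta - 2u$ cancels in $\Zhat_{\can}(q)$ in the sense of~\eqref{eq:cancel}. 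Example~\ref{ex:cancel_H} shows this cancellation really can occur, so the entire content of the conjecture lies in controlling cancellations across different $\spc$ structures.

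To see all $\spc$ structures at once, I would package them into the group-ring-valued series
\[
 G(q) := \sum_{b \in \spc(Y)} \Zhat_b(q)\,[b] = q^{\frac{-3s-\Tr(M)}{4}} \sum_{l \in \LCt} \ct_l\, q^{-l^2/4}\, [-l] \in \ZZ[H]\bigl[\bigl[q^{1/(4\lvert H\rvert)}\bigr]\bigr],
\]
where the second equality uses $l \in \LCt_b \iff [-l] = [b]$ and the exponent denominators are controlled by Lemma~\ref{description-c}'s analogue (part~(2)). Identifying $\spc(Y)$ with $H$ via $\can$, the symbols $[b]$ form a $\ZZ$-basis, so summands for distinct $b$ cannot cancel against one another; hence the lowest $q$-exponent occurring in $G(q)$ is exactly $\min_b \Delta_b$. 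Consequently the conjecture is equivalent to the assertion that the truncation of $G(q)$ to the lattice vectors with $-l^2 \leq B$ is a nonzero element of $\ZZ[H]\bigl[q^{1/(4\lvert H\rvert)}\bigr]$. Because $\delta - 2u$ already contributes a term at norm $-l^2 = B$, this truncation is a nonempty sum; the only way it can vanish is if all low-norm contributions, now spread over every $\spc$ class simultaneously, cancel in the group ring.

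The crux is therefore to prove that this low-order truncation of $G(q)$ does not vanish. I would pursue two complementary lines. The first is representation-theoretic: apply a character $\chi$ of the finite abelian group $H$ to obtain the scalar-valued series $\chi(G)(q) = q^{(-3s-\Tr(M))/4}\sum_{l \in \LCt}\ct_l\,\chi([-l])\,q^{-l^2/4}$, and show that for a suitable $\chi$ separating the classes of the competing minimal-norm vectors its lowest exponent is $\leq -\gamma(Y)/4 + 1/2$; non-vanishing for a single $\chi$ already forces non-vanishing of the truncation of $G$. The second is combinatorial: using the splice-diagram expression for $-l^2$ (Theorem~\ref{theorem:neumann}) together with the explicit signs of the coefficients $\ct_l$ from Lemma~\ref{description-c}, I would analyze the globally shortest vectors of $\LCt$ directly and show they cannot be matched into canceling families across all classes at once.

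The main obstacle -- and the reason this remains a conjecture -- is precisely ruling out such simultaneous cancellation: a priori the indefinite arithmetic of the norms $-l^2$ could conspire with the sign pattern of the $\ct_l$ to annihilate every group-ring coefficient up to level $B$. The examples suggest that cancellation is always forced by an extra symmetry of the plumbing graph, and such a symmetry acts on $\spc(Y)$ in a way that seems incompatible with producing cancellation in \emph{all} classes simultaneously. Turning this heuristic into a proof -- say by exhibiting, for every negative definite $Y$, an orbit of minimal-norm vectors whose group-ring contribution is provably uncancellable -- is where the genuine difficulty lies.
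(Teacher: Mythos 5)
You should first note that the statement you were given is stated in the paper as a \emph{Conjecture}: the paper offers no proof of it, only the heuristic remark that the cancellations obstructing the bound ``are rather special, being related to some additional symmetry of the plumbing graph'' and would be surprising if they occurred for all $\spc$ structures at once. Your write-up, by its own final paragraph, also stops short of a proof, so there is no complete argument on either side to compare; what can be assessed is your reduction, and that part is correct and is essentially the paper's own motivating discussion made precise. Indeed, $\delta-2u\in\LCt$ by Lemma~\ref{description-c} (entries $-1$ at leaves, $0$ at degree-$2$ vertices, $\delta_i-2$ at nodes), it represents $\can$ since $-(\delta-2u)=2u-\delta$, and by Proposition~\ref{base-k2-s} together with Lemma~\ref{lm:delta_minimizer} its norm $-(\delta-2u)^2=-(2u-\delta)^2$ corresponds exactly to the exponent $-\gamma(Y)/4+\tfrac12$; this is precisely how the paper's proof of Theorem~\ref{thm:seifert} produces the Seifert formula, where the absence of cancellation is verified by showing $\pm(2u-\delta)$ are the \emph{unique} minimizers with equal coefficients $\tilde c_{\pm l}=(-1)^n/2$. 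Your group-ring packaging $G(q)=\sum_b \Zhat_b(q)[b]$ is a clean and correct way to encode $\min_b\Delta_b$ (distinct classes are linearly independent in $\ZZ[H]$, so cross-class cancellation is impossible, and your reading $l\in\LCt_b\iff[-l]=[b]$ mod $2M\ZZ^s$ is the intended one), and the resulting equivalence of the conjecture with non-vanishing of the truncation of $G(q)$ at norms $\leq -(2u-\delta)^2$ is valid.

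The genuine gap is that neither of your two attack lines is executed, and neither contains a new mechanism. The character approach gains nothing: over $\CC$, the truncation of $G(q)$ is nonzero if and only if $\chi(G)(q)$ has a low-order term for \emph{some} character $\chi$ of $H$, so ``find a suitable $\chi$'' is just a restatement of the problem, with no criterion offered for choosing $\chi$ or for bounding the lowest exponent of $\chi(G)$. The combinatorial approach runs directly into the phenomena the paper exhibits: in Example~\ref{ex:cancel_H} the two minimal-norm vectors of $\LCt_{\can}$ pair off with coefficients $\tfrac14$ and $-\tfrac14$, and Example~\ref{ex:t_cancel_0} shows cancellation persists even in the $t$-refined series $\Zhathat_{\can}(q,t)$, so sign bookkeeping from Lemma~\ref{description-c} plus the splice-diagram norm formula (Theorem~\ref{theorem:neumann}) is demonstrably insufficient on its own; nothing in your outline shows that when the canonical class cancels, some \emph{other} class must inherit a surviving vector of norm $\leq -(2u-\delta)^2$. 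Your closing heuristic about graph symmetries coincides verbatim with the paper's stated belief, so the status after your proposal is exactly the status in the paper: a correct equivalent reformulation with the crux --- ruling out simultaneous cancellation in every $\spc$ class at all norms up to $-(2u-\delta)^2$ --- untouched.
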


We now proceed with several examples of manifolds for which the cancellations occur. We~also compare \smash{$\Zhat(q)$} with the two-variable extension \smash{$\Zhathat$} defined in~\cite{AJK23}. The new variable $t$ can distinguish vectors $l$ with the same value of $l^2$, removing some cancellations, but not all of them, see Example~\ref{ex:t_cancel_0}.

\begin{Example}\label{ex:cancel_H}
 Consider the following plumbing:
$$
 \includegraphics[scale=1.1]{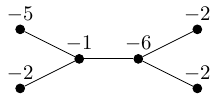}
$$
The resulting plumbed manifold $Y$ admits $20$ $\spc$ structures. For the canonical $\spc$ structure $\can$ we have \smash{\raisebox{0.3pt}{$\Delta_{\can} = \frac{33}{20}$}} with the (sole) minimizing vector $(1, -1, 1, -1, -1, 1)$. This is strictly larger than \smash{\raisebox{-0.3pt}{$-\frac{\gamma(Y)}{4} + \frac12 = \frac{13}{20}$}}.

The only vectors within $\LCt_{\can}$ that satisfy \smash{$\frac{1}{4}\bigl(-3s - \operatorname{Tr}(M) - l^2\bigr) = \frac{13}{20}$} are $l_1\! =\! (1, 1, -1, 1, 1, -1)$ and $l_2 = (-1, -1, 1, 1, 1, -3)$. However, their coefficients are \smash{$\ct_{l_1} = \frac{1}{4} = -\ct_{l_2}$} and so they cancel~out in \smash{$\Zhat_{\can}(q)$}.

The above cancellation does not happen for \smash{$\Zhathat_{\can}(q,t)$},
\[
\Zhathat_{\can}(q,t) = -\frac{1}{4}\bigl(\bigl(t^{-1} -t\bigr)q^\frac{13}{20} +q^\frac{33}{20} -t^{-1}q^\frac{53}{20} + \bigl(t^{-2} + t^2\bigr)q^\frac{73}{20} - t^{-1}q^\frac{93}{20} +\cdots\bigr).
\]
\end{Example}

\begin{Example}\label{ex:cancel_seifert}
 Consider Seifert manifold $M(2;(3,1),(3,2),(3,2))$. It is described by the following negative definite plumbing:
$$
 \includegraphics[scale=1.1]{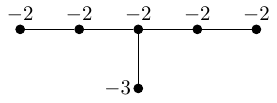}
$$
 It has $|H|$ = 9 and Casson--Walker invariant $\lambda = -2$. For the canonical $\spc$ structure $\can$ represented by $2u - \delta = ( 1, 1, 1, 0, 0, -1)$, the $q$-series $\Zhat_{\can}(q)$ is a single monomial
 \[
 \Zhat_{\can}(q) = \frac12 q^{-\frac{5}{6}}.
 \]
 This can be seen using the reduction theorem from~\cite[Theorem~4.2]{GKS23}. It implies that \smash{$\Zhat_{\can}(q)$} can be written as
 \begin{gather}
 q^{-\frac{5}{6}} \mathcal{L}_{27\cdot 9} \biggl(\se \frac{-1}{\bigl(z^{54}-1\bigr)}\biggr) \nonumber\\
 \qquad{}=\frac12
 q^{-\frac{5}{6}} \mathcal{L}_{27\cdot 9} \bigl( \bigl(
 1+z^{54}+z^{108}+z^{162} + \cdots\bigr) +
 \bigl(-z^{-54}-z^{-108}-z^{-162} - \cdots\bigr)\bigr)\nonumber\\
 \qquad{}= \frac12 q^{-\frac{5}{6}} \mathcal{L}_{27\cdot 9} (1) = \frac12 q^{-\frac{5}{6}}.\label{eq:reduction}
 \end{gather}

 Here the shortcut $\se$ denotes the symmetric expansion (average of expansions at $z=0$ and infinity), and the operator \smash{$\mathcal{L}_{27\cdot 9}$} is defined as
 \[
 \mathcal{L}_{27\cdot 9} (z^n) := q^{\frac{n^2}{4\cdot 27\cdot 9}}
 \]
  on monomials and extended linearly.
 The exponent \smash{$-\frac{5}{6}$} is computed as
 \[
 \frac{1}{4|H|}\Biggl(24\lambda -a_1 a_2 a_3\Biggl(-1+\sum_{i=1}^3 \frac{1}{a_i^2}\Biggr)\Biggr) = -\frac{5}{6}.
 \]
 In the last line of the equation \eqref{eq:reduction}, we used that $\mathcal{L}(z^n) = \mathcal{L}(z^{-n})$ for any $n \in \Z$. One could trace back this cancellation along the proof of the reduction theorem to find that all vectors except $2u - \delta$ \big(corresponding to the $z^0=1$ above\big) in the set $\LCt$ can be split into pairs~$l_1$,~$l_2$ satisfying $l_1^2=l_2^2$ and $\tilde{c}_{l_1} = -\tilde{c}_{l_2}$.
 Similarly to the previous example, \smash{$\Zhathat_{\can}(q,t)$} removes the cancellations and it gives an infinite series
 \[
 \Zhathat_{\can}(q,t) = \frac12 \bigl( q^{-\frac{5}{6}} t + q^{\frac{13}{6}} \bigl(1-t^{2}\bigr) + q^{\frac{67}{6}} \bigl(t^{-1}-t^{3}\bigr) + q^{\frac{157}{6}} \bigl(t^{-2} - t^{4}\bigr)+ \cdots\bigr).
 \]
 Note that this can be obtained by a reduction theorem for the \smash{$\Zhathat$} series, which was recently proved in~\cite{Liles2024}.
\end{Example}

\begin{Example}\label{ex:t_cancel_0}
We denote by $\Delta_{\can}'$ the smallest $q$-exponent in \smash{$\Zhathat_{\can}(q,t)$}. Consider the following plumbing:
$$
 \includegraphics[scale=1.1]{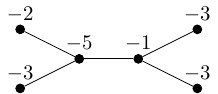}
$$
The resulting plumbed manifold $Y$ admits $21$ $\spc$ structures. For the canonical $\spc$ structure $\can$ represented by $2u - \delta = ( 1, 1, -1, 1, 1, -1)$, we have that \smash{$\Delta_{\can}' = \frac{5}{2}$} and the minimizing vectors for the quadratic form that produces $\Delta'_{\can}$ are given by $l_1 = (-1, -1, -3, 1, 1, 1)$, $l_2 = (-1, -1, 7, -1, -1, -1)$, $l_3 = (1, 1, -7, 1, 1, 1)$ and $l_4 = (1, 1, 3, -1, -1, -1)$ with coefficients \smash{$\ct_{l_1} = \ct_{l_3}= -\frac{1}{4}t^{-1}$} and \smash{$\ct_{l_2} = \ct_{l_4 }= -\frac{1}{4}t$}. This is strictly larger than \smash{$-\frac{\gamma(Y)}{4} + \frac12 = \frac{1}{2}$}.

For this manifold, the only vectors within $\tilde{\mathcal{C}}_{\can}$ which satisfy \smash{$\frac{1}{4}\bigl(-3s - \operatorname{Tr}(M) - l^2\bigr) = \frac{1}{2}$} are $l_1 = (-1, -1, 1, -1, -1, 1)$, $l_2 = (-1, -1, 3, 1, 1, -1)$, $l_3 = (1, 1, -1, 1, 1, -1)$ and $l_4 = (1, 1, -3,\allowbreak -1, -1, 1)$. However, their coefficients are \smash{$\ct_{l_1} = \frac{1}{4}t^{-1} = -\ct_{l_4}$} and \smash{$\ct_{l_2} = -\frac{1}{4}t = -\ct_{l_3}$} and so they cancel out in \smash{$\Zhathat_{\can}(q,t)$}. The entire $(q,t)$-series is given by
\[
\Zhathat_{\can}(q,t) = -\frac{1}{4}\bigl(\bigl(2t^{-1} + 2t\bigr)q^\frac{5}{2} + \bigl(2t^{-3} +2t^3\bigr)q^\frac{9}{2} + \bigl(-t^{-4} + t^{-2} +t^2 -t^4\bigr)q^\frac{15}{2} + \cdots\bigr).
\]

This example shows that cancellations do occur even for \smash{$\Zhathat_{\can}(q,t)$} and as a result
\[
\Delta_{\can}' > -\frac{\gamma(Y)}{4} + \frac{1}{2}.
\]
\end{Example}

\section{Comparison with correction terms}

In the last section, we compare $\Delta_b(Y)$ with correction terms $d_b(Y)=d(Y,[b])$ in Heegaard--Floer homology.\footnote{Again, we omit the brackets for $\spc$ structures.} We include this discussion because there were some expectations that $\Delta_b(Y)$ and~$d_b(Y)$ might be related~\cite{AJK23, GPP21}. In the Seifert case, we have an explicit formula for $\Delta_{\can}(Y)$ in terms of the $\gamma(Y)$ invariant by Theorem~\ref{thm:seifert}. We can then use elementary bounds for Dedekind sums to obtain estimates on $\Delta_{\can}(Y)$. Finally, we compare $\Delta_{\can}(Y)$ to $d_{\can}(Y)$ for some classes of Brieskorn spheres, where $d_{\can}(Y)$ is known, finding that they are very different.

\subsection{Quadratic forms}

Correction terms can be expressed as minimizers of a quadratic form over the characteristic vectors.

\begin{Theorem}[{\cite{Nem05}}]\label{thm:cor_terms} For an almost rational graph $\Gamma$, the correction terms are given by
 \[
 d_k(Y) = \max_{k' \in [k]} \frac{(k')^2+s}{4}.
 \]
\end{Theorem}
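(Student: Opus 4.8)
The plan is to derive the formula from the Ozsv\'ath--Szab\'o description of $HF^+$ of negative definite plumbed manifolds, reorganized through lattice cohomology (graded roots). The first, purely algebraic, step is to rewrite the optimization. Writing a characteristic vector in the class $[k]$ as $k' = k + 2Ml$ with $l \in \ZZ^s$, the identity $M^{-1}M = \mathrm{Id}$ gives
\[
 (k')^2 = k^2 + 4k^Tl + 4l^TMl = k^2 - 8\chi_k(l), \qquad \chi_k(l) := -\tfrac12\bigl(l^TMl + k^Tl\bigr).
\]
Since $-M$ is positive definite, $\chi_k$ is strictly convex and attains a minimum, and the claim becomes the equivalent statement
\[
 d_k(Y) = \frac{k^2+s}{4} - 2\min_{l \in \ZZ^s}\chi_k(l),
\]
so the whole theorem reduces to computing $d_k(Y)$ as a normalization of the global minimum of the twisted Riemann--Roch function $\chi_k$.

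For the inequality $\max_{k'\in[k]}\tfrac14\bigl((k')^2+s\bigr) \le d_k(Y)$ I would invoke the general Ozsv\'ath--Szab\'o bound for a rational homology sphere bounding a negative definite $4$-manifold. The plumbing $4$-manifold $X(\Gamma)$ satisfies $\partial X(\Gamma) = Y$ and has $b_2 = s$; its spin$^{\rm c}$ structures $\mathfrak t$ have $c_1(\mathfrak t)$ ranging over the characteristic vectors with $c_1(\mathfrak t)^2 = (k')^2$, and $\mathfrak t|_Y = [k]$ precisely when $k' \in [k]$. The cobordism inequality $c_1(\mathfrak t)^2 + s \le 4\,d(Y,[k])$ then yields the desired $\le$ for every $k'$.

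The content of the theorem is the reverse inequality, and this is where almost rationality is used. I would reconstruct $HF^+(Y,[k])$ combinatorially: an almost rational graph has at most one bad vertex, so the Ozsv\'ath--Szab\'o algorithm computes $HF^+(Y,[k])$ from characteristic vectors subject to the adjunction relations, with $d_k(Y)$ its bottom grading. I would then package this via the graded root $R_k$ attached to $\chi_k$: fixing the distinguished vertex $v_0$ whose modification makes $\Gamma$ rational, one restricts $\chi_k$ to the one-parameter computation sequence $\{x(i)\}$ produced by a generalized Laufer algorithm in the $v_0$-direction and records the local minima of $i \mapsto \chi_k(x(i))$. The standard identification matches $R_k$ with the Ozsv\'ath--Szab\'o module and its bottom grading with $\tfrac{k^2+s}{4} - 2\min_i \chi_k(x(i))$.

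The decisive step, and the one I expect to be the main obstacle, is the lattice lemma asserting that the minimum of $\chi_k$ along this one-dimensional sequence coincides with its global minimum,
\[
 \min_i \chi_k(x(i)) = \min_{l \in \ZZ^s}\chi_k(l).
\]
This is exactly where the hypothesis enters: starting from an arbitrary lattice point one shows $\chi_k$ can be strictly decreased by moves toward the computation sequence, because after replacing $m_{v_0}$ by a sufficiently negative integer the graph is rational and $\chi_k$ has a single ``valley.'' Granting this reduction to one variable, combining it with the cobordism inequality gives $d_k(Y) = \tfrac{k^2+s}{4} - 2\min_l \chi_k(l) = \max_{k'\in[k]}\tfrac14\bigl((k')^2+s\bigr)$, which is the claim.
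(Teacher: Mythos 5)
The paper itself contains no proof of this statement: it is imported verbatim from~\cite{Nem05} (the surrounding text only remarks that the formula is conjectured to hold for all negative definite rational homology spheres~\cite{Nemethi_lattice08}). So the only meaningful comparison is with N\'emethi's original argument, and your outline is essentially a faithful reconstruction of it. Your algebraic normalization is correct: with $k' = k + 2Ml$ one indeed gets $(k')^2 = k^2 - 8\chi_k(l)$, so the theorem is equivalent to $d_k(Y) = \frac{k^2+s}{4} - 2\min_{l \in \ZZ^s}\chi_k(l)$, and since $-M$ is positive definite the maximum over $[k]$ is attained. The inequality $\max_{k' \in [k]} \frac{(k')^2+s}{4} \leq d_k(Y)$ does follow from the Ozsv\'ath--Szab\'o bound for negative definite $4$-manifolds applied to the simply connected plumbing $X(\Gamma)$, whose ${\rm spin}^{\rm c}$ structures realize every characteristic vector in the class; and the reverse inequality is exactly what the graded-root computation of $HF^+$ gives, with the lattice lemma $\min_i \chi_k(x(i)) = \min_{l \in \ZZ^s} \chi_k(l)$ along the generalized Laufer computation sequence as the crux. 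You correctly isolate that lemma as the decisive step; it is precisely N\'emethi's main lattice-theoretic input, and leaving it as a flagged black box is reasonable at this level of detail.

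Two inaccuracies are worth fixing. First, ``an almost rational graph has at most one bad vertex'' is backwards: graphs with at most one bad vertex form a proper subclass of almost rational graphs (AR means some distinguished vertex $v_0$ can have its decoration decreased so that $\Gamma$ becomes rational). Consequently you cannot invoke the Ozsv\'ath--Szab\'o algorithm verbatim to identify $HF^+$ in the general AR case; that identification, via the $\tau$-function and graded roots, is itself the main theorem of~\cite{Nem05}, and your argument should rest on it rather than on the one-bad-vertex algorithm. Second, orientation bookkeeping is glossed over: the combinatorial model computes $HF^+$ of the reversed-orientation manifold, and the sign conventions in reading off the bottom grading must be tracked to land exactly on the stated formula (a sanity check on the $E_8$ plumbing, where $k=0$ is characteristic and $d=2$, confirms the conventions used here). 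Neither point is a fatal gap; the architecture of your proof is the correct one.
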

Note that this formula holds in many other cases and was conjectured by the second author to be true for any negative definite rational homology sphere~\cite{Nemethi_lattice08}.

To compare with $\Delta$, we need to shift to our conventions on $\spc$ structures, so we set $l = k' - Mu$. Then we have
 \begin{align*}
 \frac{(k')^2+s}{4} &= \frac{(l + Mu)^T M^{-1}(l + Mu) + s}{4}\\
 &= \biggl(\frac{\Tr(M) +3s}{4} + \frac{l^2}{4} \biggr) + \frac{l^T u}{2} - \frac12.
 \end{align*}
We see that the minimized quadratic forms for $d$ and $\Delta$ differ in the linear term
\[
\frac{l^T u}{2}= \frac{\bigl(\sum_{i=1}^s l_i\bigr)}{2}.
\]

\begin{Remark}
 In~\cite{GPP21}, the authors observed that in the setting of Theorem~\ref{thm:cor_terms}, the difference between $d_b(Y)$ and \smash{$\frac12-\Delta_b(Y)$} is an integer. For Seifert manifold $Y$ with the canonical $\spc$ structure, this can be explained as follows:
 By~\cite[Section~11.7.2]{Nem22},
 $d_{\can}(Y)$ and $\gamma(Y)$ (and therefore $\Delta_{\can}(Y)$) satisfy the following relation:
 \begin{equation*}
 d_{\can}(Y)=\frac{\gamma(Y)}{4} - 2\cdot \min\chi_{\can} \biggl( = -\Delta_{\can} + \frac12 - 2 \min \chi_{\can} \biggr).
 \end{equation*}
 Here $x\mapsto \chi_{\can}(x)$ is the holomorphic Euler characteristic of a sheaf $\mathcal{O}_x$ on the corresponding weighted homogeneous singularity with link $Y$, and $\min\chi_{\can}$ is its minimal value. In particular,
 \smash{$d_{\can}+\Delta_{\can}-\frac12 =-2\cdot \min\chi_{\can} $} is an even integer.
\end{Remark}

\subsubsection{Lower bound for Seifert manifolds}

We end this section with some elementary estimates considering the $\gamma(Y)$ invariant of Seifert manifolds, and hence of $\Delta_{\can}(Y)$. Theorem~\ref{thm:seifert} and the formula \eqref{eq:formula_gamma} for $\gamma(Y)$ give a formula for $\Delta_{\can}(Y)$,
 \begin{equation}\label{eq:formula_delta}
 \Delta_{\can} = - \frac{1}{4{\rm e}}\Biggl(2-n + \sum_{i=1}^n \frac{1}{a_i}\Biggr)^2 - \frac{{\rm e}+3}{4} - 3 \sum_{i=1}^n \ds(\omega_i, a_i).
 \end{equation}
We have the following well-known bound for Dedekind sums, for any $p>0$, $a \in \ZZ$:
\begin{equation*}
 -\mathbf{s}(1, p) \leq \mathbf{s}(a, p) \leq \mathbf{s}(1, p) = \frac{p}{12} + \frac{1}{6p} - \frac{1}{4}.
\end{equation*}
This estimate then gives the following.

\begin{Proposition}\label{lower-bound-delta-general-seifert}
 For the Seifert manifold $Y = M(b_0;(a_1, \omega_1), \dots, (a_n, \omega_n))$ and the canonical $\spc$ structure, we have
 \[
 \Delta_{\can}(Y) \leq - \frac{1}{4{\rm e}}\Biggl(2-n + \sum_{i=1}^n \frac{1}{a_i}\Biggr)^2 - \frac{{\rm e}+3(n+1)}{4} + \sum_{i=1}^n \biggl( \frac{a_i}{4} + \frac{1}{2a_i}\biggr)
 \]
 and
 \[
 \Delta_{\can}(Y) \geq -\frac{1}{4{\rm e}}\Biggl(2-n + \sum_{i=1}^n \frac{1}{a_i}\Biggr)^2 -\frac{{\rm e}+3(1-n)}{4} - \sum_{i=1}^n \biggl(\frac{a_i}{4} + \frac{1}{2a_i}\biggr),
 \]
 where ${\rm e}$ is the orbifold Euler number of $Y$.
\end{Proposition}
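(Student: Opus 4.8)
The plan is to derive both inequalities directly from the closed formula \eqref{eq:formula_delta} for $\Delta_{\can}$ by substituting the two-sided estimate for the Dedekind sums stated just above. The only term in \eqref{eq:formula_delta} that depends on the residues $\omega_i$ is $-3\sum_{i=1}^n \ds(\omega_i, a_i)$; the remaining two summands, namely $-\frac{1}{4{\rm e}}(2-n+\sum_i 1/a_i)^2$ and $-({\rm e}+3)/4$, depend only on the $a_i$ and on the orbifold Euler number ${\rm e}$, so they are carried through unchanged and appear identically in both bounds. Thus the whole argument reduces to bounding the single sum $\sum_i \ds(\omega_i, a_i)$ and then tracking an additive constant.

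For the upper bound on $\Delta_{\can}$, I would apply the lower estimate $\ds(\omega_i, a_i) \geq -\mathbf{s}(1, a_i)$ term by term. Because of the factor $-3$, this becomes an \emph{upper} bound on the relevant summand: $-3\sum_i \ds(\omega_i,a_i) \leq 3\sum_i \mathbf{s}(1, a_i)$. Expanding $\mathbf{s}(1, a_i) = \frac{a_i}{12} + \frac{1}{6a_i} - \frac14$ gives $3\,\mathbf{s}(1,a_i) = \frac{a_i}{4} + \frac{1}{2a_i} - \frac34$, so summing over $i$ produces $\sum_i\bigl(\frac{a_i}{4} + \frac{1}{2a_i}\bigr) - \frac{3n}{4}$. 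Absorbing the constant $-\frac{3n}{4}$ into the term $-\frac{{\rm e}+3}{4}$ collapses it to $-\frac{{\rm e}+3(n+1)}{4}$, which is exactly the claimed upper bound.

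For the lower bound, I would instead use the upper estimate $\ds(\omega_i, a_i) \leq \mathbf{s}(1, a_i)$, which after multiplication by $-3$ yields $-3\sum_i \ds(\omega_i,a_i) \geq -\sum_i\bigl(\frac{a_i}{4} + \frac{1}{2a_i}\bigr) + \frac{3n}{4}$. Combining the constant $+\frac{3n}{4}$ with $-\frac{{\rm e}+3}{4}$ gives $-\frac{{\rm e}+3(1-n)}{4}$, producing the stated lower inequality.

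There is essentially no obstacle here: once \eqref{eq:formula_delta} and the elementary Dedekind-sum inequality are in hand, the argument is a term-by-term substitution followed by bookkeeping of the additive constant. The only point requiring a moment's care is the reversal of the inequality direction caused by the negative coefficient $-3$ in front of the Dedekind sum, which is precisely what interchanges the roles of the lower and upper Dedekind estimates when passing between the upper and lower bounds for $\Delta_{\can}$.
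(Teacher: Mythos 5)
Your proposal is correct and coincides with the paper's own argument: the paper obtains the proposition exactly by substituting the two-sided Dedekind-sum estimate $-\mathbf{s}(1,a_i) \leq \mathbf{s}(\omega_i,a_i) \leq \mathbf{s}(1,a_i) = \frac{a_i}{12}+\frac{1}{6a_i}-\frac14$ into the closed formula \eqref{eq:formula_delta} for $\Delta_{\can}$, with the sign reversal from the coefficient $-3$ handled just as you describe. Your bookkeeping of the constants ($-\frac{3n}{4}$ merging into $-\frac{{\rm e}+3(n+1)}{4}$, and $+\frac{3n}{4}$ into $-\frac{{\rm e}+3(1-n)}{4}$) checks out.
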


If $Y$ is a Brieskorn sphere $\Sigma(a_1, \dots, a_n)$, then $e = -\prod_{i=1}^n a_i^{-1}=-A^{-1}$. In this case, one can~write
\begin{equation*}
 \Delta(Y) = \frac{(n-2)^2A}{4} + (2-n) \sum_{i=1}^n \frac{A}{2a_i} + \sum_{i < j}\frac{A}{2a_ia_j} + \sum_{i=1}^n \frac{A}{4a_i^2} + \text{error terms},
\end{equation*}
where the error terms are bounded above by a linear function of the $a_i$. In particular, we see, up to these error terms, that $\Delta$ is a polynomial in $a_1, \dots, a_n$ of degree $n$ with leading term $(n-2)^2A/4$.

\subsubsection{Brieskorn spheres}

We illustrate the difference between $\Delta = \Delta_{\can}$ and $d = d_{\can}$ for some families of Brieskorn spheres, for which correction terms are explicitly known~\cite{BorNem2011}. For $p, q > 0$, set $\rho = (p-1)(q-1)/2$. Then for $\Sigma(p, q, p q+1)= S^3_{-1}(T_{p, q})$ we have
$\gamma(Y) = -4\rho(\rho-1)$. From Theorem~\ref{thm:seifert}, we immediately obtain the next result.

\begin{Corollary}
 For $Y = \Sigma(p, q, pq+1)$,
 \begin{equation*}
 \Delta(Y) = \rho(\rho-1) + \frac{1}{2}.
 \end{equation*}
\end{Corollary}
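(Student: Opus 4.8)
The plan is to read off the result as a one-line specialization of Theorem~\ref{thm:seifert}. First I would record the structural facts about $Y=\Sigma(p,q,pq+1)$ that make the theorem applicable: the three multiplicities $p$, $q$, $pq+1$ are pairwise coprime, so $Y$ is an integral homology sphere and $H_1(Y)=0$. Consequently $\spc(Y)$ is a single point, the canonical structure $\can$ is the only $\spc$ structure, and there is nothing to minimize over $b$; in particular $\Delta(Y)=\Delta_{\can}(Y)$, and the last clause of Theorem~\ref{thm:seifert} (minimality, valid away from lens spaces, which a Brieskorn sphere with three fibers of multiplicity $\geq 2$ is not) is automatic.

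Second, I would invoke Theorem~\ref{thm:seifert} directly. Since $Y$ is a negative definite Seifert homology sphere fibered over $S^2$, the theorem gives
\[
\Delta_{\can}(Y) = -\frac{\gamma(Y)}{4} + \frac12.
\]
Substituting the value $\gamma(Y)=-4\rho(\rho-1)$ stated just above the corollary then yields
\[
\Delta(Y) = -\frac{-4\rho(\rho-1)}{4} + \frac12 = \rho(\rho-1) + \frac12,
\]
which is the claim.

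The only genuine content is the evaluation $\gamma(Y)=-4\rho(\rho-1)$, and this is where I expect the main (and really the only) work to lie. To make the argument self-contained rather than quoting it, I would compute $\gamma(Y)$ from the Seifert formula \eqref{eq:formula_gamma}. For $\Sigma(p,q,pq+1)$ one has $n=3$, orbifold Euler number ${\rm e}=-1/A$ with $A=pq(pq+1)$, and Seifert data whose Hirzebruch--Jung continued fractions produce explicit pairs $(\omega_i,a_i)$. The bulk of the labor is then evaluating the three Dedekind sums $\ds(\omega_i,a_i)$, using reciprocity and the special arithmetic of the triple $(p,q,pq+1)$, and checking that the rational contributions combine with the ${\rm e}+5$ term into the clean quadratic $-4\rho(\rho-1)$ in $\rho=(p-1)(q-1)/2$. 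Since the preceding discussion already supplies this value, and it is moreover consistent with the correction-term computations of \cite{BorNem2011} through the relation $d_{\can}=\gamma(Y)/4-2\min\chi_{\can}$, I would simply cite it and present the corollary as the immediate substitution displayed above.
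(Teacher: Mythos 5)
Your proof is correct and follows essentially the same route as the paper: the corollary is obtained there exactly as an immediate substitution of the value $\gamma(Y)=-4\rho(\rho-1)$ (stated just before the corollary, with the correction terms drawn from the cited reference) into the formula $\Delta_{\can}=-\gamma(Y)/4+\tfrac12$ of Theorem~\ref{thm:seifert}. Your preliminary remarks on coprimality, the uniqueness of the $\spc$ structure on an integral homology sphere, and the Dedekind-sum verification of $\gamma(Y)$ are all sound, and merely make explicit what the paper leaves implicit.
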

In contrast to this, the correction term vanishes, so that $\Delta(Y) \gg d(Y) = 0$.

For $Y = \Sigma(p, p+1, p(p+1)-1)$, we have
\begin{equation*}
d(Y) = \biggl\lfloor \frac{p}{2} \biggr\rfloor\biggl(\biggl\lfloor \frac{p}{2} \biggr\rfloor+1\biggr).
\end{equation*}
From Proposition~\ref{lower-bound-delta-general-seifert}, we obtain, after some manipulations, that for $p\geq 4$
\begin{equation*}
 \Delta(Y) \geq \frac{1}{4}\bigl(p^3-9p- 3 \bigr) > d(Y).
\end{equation*}

The case of $p = 2$ corresponds to the Poincar\'e sphere $Y = \Sigma(2, 3, 5)$, where we have $\Delta(Y) = \smash{-\frac{3}{2}} < d(Y) = 2$. The case $p = 3$ corresponds to the Brieskorn sphere $Y = \Sigma(3, 4, 11)$, where we have \smash{$\Delta(Y) = \frac{1}{2} < d(Y) = 2$}. In fact, for the family $Y_r = \Sigma(2, 3, 6r-1)$, it was computed in~\cite{GPP21} that \smash{$\Delta(Y_r) = -\frac{3}{2} < d(Y_r) = 2$} for all $r$. This seems to be a boundary case: the orbifold Euler number ${\rm e}$ is a linear function of $r$ and the contribution of the Dedekind sums in~\eqref{eq:formula_delta} is significant (cf.~the discussion after Proposition~\ref{lower-bound-delta-general-seifert}). We expect that for sufficiently general Seifert manifolds, one has
$d_{\can} < \Delta_{\can}$.

\appendix

\section{Equivalence of negative and weakly negative plumbings}

\subsection{Statement} Let $\Gamma$ be a connected negative definite plumbing graph whose associated plumbed 3-manifold $Y(\Gamma)$ is a $\QHS$. This means that $\Gamma$ is a tree, all genus decorations are zero, and $\det(M(\Gamma))\not=0$, where $M(\Gamma)$ is the plumbing matrix of $\Gamma$
 defined as in Section~\ref{ss:plumbing}. We denote the $\Q$-vector space generated by the vertices of $\Gamma$ by
 $\Q^s$.

 Let $M^{-1}$ be the inverse of $M$ (over the rational numbers), and let \smash{$M^{-1}|_{\Q^{s_3}}$}
 be its restriction to the subvector space of $\Q^s$ generated by vertices of degree $\delta_i\geq 3$.

We say that (cf.~\cite[Definition~4.3]{GM21}) $\Gamma$ is a \emph{weakly negative definite} plumbing graph
if ${M\!=\!M(\Gamma)}$ is nondegenerate and \smash{$M^{-1}|_{\Q^{s_3}}$} is a negative definite form.
Recall that $\Gamma$ is a {\it negative definite }
if~${M=M(\Gamma)}$ \big(or, equivalently, $M^{-1}$\big) is negative definite.

The main result of this appendix is the following.

\begin{Theorem}\label{th:app}
Let $\Gamma$ be a connected weakly negative definite plumbing graph. Then $\Gamma$ can be modified
 by plumbing calculus $($Neumann moves$)$ into another connected plumbing graph $\Gamma'$ such that
$\Gamma'$ is negative definite.

In particular, the family of $3$-manifolds associated with connected weakly negative definite plumbing graphs, respectively connected negative definite plumbing graphs agree.
\end{Theorem}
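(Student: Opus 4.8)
The plan is to reformulate both notions of definiteness through a block decomposition of $M$ and then to realize the passage from the weak condition to the strong one by explicit plumbing calculus. Order the vertices so that $M=\bigl(\begin{smallmatrix}A&B\\ B^{T}&D\end{smallmatrix}\bigr)$, where $A$ collects the vertices of degree $\ge 3$ (the nodes) and $D$ collects the vertices of degree $\le 2$; the latter form disjoint linear chains (the maximal strings), so $D$ is block diagonal with tridiagonal blocks. When $D$ is invertible, the top-left block of $M^{-1}$ is the inverse of the Schur complement $A-BD^{-1}B^{T}$, so $M^{-1}|_{\Q^{s_3}}$ is negative definite exactly when $A-BD^{-1}B^{T}$ is. Hence \emph{weak} negative definiteness says precisely that this Schur complement is negative definite, whereas \emph{full} negative definiteness is the conjunction of the Schur complement being negative definite and $D$ itself being negative definite. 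Thus the whole problem reduces to modifying the strings by Neumann moves until the string block $D$ becomes negative definite, without destroying the Schur-complement condition.

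First I would record that weak negative definiteness is invariant under Neumann moves. Moves acting on a string keep the nodes as nodes (blowing down a $(-1)$ adjacent to a node merely merges part of the string and lowers the node decoration, preserving its valence), and by Theorem~\ref{theorem:neumann} the entries $M^{-1}_{vv'}$ for nodes $v,v'$ are read off from the weights of the splice diagram, which is an invariant of $Y$. Since such moves preserve the $\QHS$ property (nondegeneracy of $M$), the bilinear form $M^{-1}|_{\Q^{s_3}}$ is unchanged along any sequence of moves that leaves the node set intact, so the Schur complement stays negative definite throughout. This frees me to manipulate the strings at will and only track when $D$ turns negative definite; I would begin by passing to a minimal good plumbing, removing all $(-1)$-vertices of valence $\le 2$.

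The core step is to make $D$ negative definite. Under string-\emph{internal} moves a string carries an invariant continued-fraction value, so an isolated ``bad'' string cannot be repaired in isolation; the key observation is that moves which also alter the decoration of an incident node change this value, letting me flip positive contributions into the nodes. Concretely, a leaf with decoration $t\ge 2$ is turned into a chain of $(-2)$'s by a $(-1)$-blow-up on its edge to the neighbour $u$ (lowering $u$ and the leaf) followed by a $(+1)$-blow-down once the leaf reaches $+1$; each such flip lowers the number of positive eigenvalues $n_{+}(M)$ by one and pushes the excess onto $u$. Iterating over all strings, first the terminal ones and then the internal strings between two nodes (where the excess is absorbed into both endpoints), I would drive every string to a chain with all decorations $\le -2$, i.e.\ to a negative definite block, while $n_{+}(M)$ strictly decreases at each flip and weak negative definiteness is preserved. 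I would organise this as an induction on $n_{+}(M)$, equivalently on the number of vertices with non-negative decoration; the case $s_3=0$ (a single string, a lens space) is the base case, where the condition is vacuous and the string flips directly to all-$\le -2$ form.

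Once $D$ is negative definite, the Schur-complement criterion gives that $M$ itself is negative definite, which is the first assertion; the second follows because a negative definite graph is trivially weakly negative definite (a restriction of the negative definite form $M^{-1}$ is negative definite), so the two classes of graphs, hence of $3$-manifolds, coincide. The main obstacle is the third step: proving that the flipping procedure terminates in the all-$\le -2$ normal form for \emph{every} string, including internal ones, together with the bookkeeping when an intermediate decoration hits $0$ or when $D$ is temporarily degenerate (so the Schur-complement formula must be applied with care). Verifying that each flip strictly decreases $n_{+}(M)$ and never creates new positive directions elsewhere, so that the induction genuinely closes, is where the real work lies; the invariance of $M^{-1}|_{\Q^{s_3}}$ is exactly what guarantees that the node part is never spoiled in the process.
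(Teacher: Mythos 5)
Your overall architecture coincides with the paper's: your Step 2 is exactly the paper's Proposition~\ref{prop:app2} (the Schur complement $S=A-BD^{-1}B^{T}$ satisfies $S^{-1}=A'$, so negative definiteness of $D$ together with that of $M^{-1}$ restricted to the nodes yields negative definiteness of $M$), and your string-normalization is the paper's Step 1. The Schur part of your argument is correct and complete. The genuine gap is in Step 1, and it sits precisely where you yourself admit ``the real work lies''; that remaining work is the actual content of the paper's proof. First, your opening claim that ``weak negative definiteness is invariant under Neumann moves'' is false as stated: the paper's Example~\ref{ex:app1} exhibits moves (which increase the number of nodes) taking a weakly negative definite graph to one that is not weakly negative definite~--- this is exactly why \cite[Remark~4.7]{GM21} is flagged as incorrect in that generality. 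Your subsequent restriction to node-preserving moves is the right instinct, but even then the restricted form is not literally unchanged: under a $(-1)$ blow-down or a $0$-chain absorption $\det(M)$ flips sign, so by \eqref{eq:app1} the off-diagonal entries $M^{-1}_{vv'}$ at nodes can change sign, and one must re-identify the forms after replacing some basis vectors by their negatives. Your appeal to splice diagrams to shortcut this is circular: the weights $\det(-M(\Gamma_{ve}))$ of the intermediate graphs need not be positive (the graphs are not negative definite yet), and the invariance of the splice diagram under plumbing calculus in this generality is established by the same determinant bookkeeping you are trying to avoid.

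Second, and more seriously, your normalization cannot always keep the node set intact, and your invariance claim covers none of the moves where it fails. A terminal string can be consumed entirely: blowing down a $(\pm 1)$ leaf adjacent to a valency-$3$ vertex $v$ turns $v$ into a valency-$2$ vertex, destroying a node (the paper's case~(a), where one checks that the old node form restricted to the new, smaller node set is a restriction of a negative definite form). A $0$-chain absorption on an internal string can merge the two endpoint nodes into one (case~(b)); note this is a one-way step, as its reverse is among the moves of Example~\ref{ex:app1} that break weak negative definiteness. Finally, a $0$-decorated leaf attached to a node $v$ would make the absorption disconnect the graph; the paper shows via \eqref{eq:app1} that this configuration forces $M^{-1}_{vv}=0$ and hence cannot occur in a weakly negative definite graph~--- a case your procedure never rules out. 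Your proposed induction also has internal problems: $n_{+}(M)$ is not equal to the number of vertices with non-negative decoration, and it need not strictly decrease at a $0$-chain absorption (a $0$-vertex contributes a hyperbolic pair, and the absorption removes one positive and one negative direction only after the move is legitimate). So while your flip of a $t\geq 2$ leaf into a $(-2)$-chain is a correct local computation, the termination and legality of the full procedure~--- exactly the paper's case analysis (a), (b), (c) together with the sign-tracking of \eqref{eq:app1}~--- is missing, and without it the proof does not close.
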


This theorem implies that the properties of any invariant associated with connected weakly negative definite plumbing graphs which is stable under the plumbing calculus of (oriented) connected plumbing graphs can be read from an equivalent
connected negative definite graph as well.

In the proof, we will use the following terminology.
Let $\Gamma$ be a plumbing graph, and let $\mathcal{N}$ be the set of \emph{nodes}, the vertices with valency $\delta_i\geq 3$. We call the connected components of the
full subgraph $\Gamma\setminus \mathcal{N}$
generated by vertices with valency $\delta_i\leq 2$ the \emph{strings of $\Gamma$}.

The proof of Theorem \ref{th:app} has two main steps.

\subsection{Step 1: Plumbing calculus}
Along the first step, we replace the connected weakly negative definite graph $\Gamma$ via plumbing calculus
by a new graph $\Gamma'$, which is connected
weakly negative definite, all the decorations of the strings are $\leq -2$, and we do not create new nodes along these moves. Along this procedure we use only such moves which preserve the connectedness of the graph and the weakly negative definiteness of the form,
and which ultimately replaces the strings into negative definite strings, and also might eliminate some
nodes but do not create new nodes.

We emphasize these details regarding the sequence of moves, since it is not true that any Neumann move preserves the weakly negative property, see Example~\ref{ex:app1}.
That is, e.g., if we increase the number of nodes by a Neumann move, even if the graph with smaller nodes is weakly negative definite, the larger graph might not be.

In particular,~\cite[Remark~4.7]{GM21}
is not correct in the generality how it was stated. However, it is correct for those moves which do not increase the number of nodes (see the proof below).

More details and proofs regarding this first step will be given in Appendix~\ref{ss:8.4}.
\begin{Example}\label{ex:app1}
Consider the following two moves:
$$
 \includegraphics[scale=1]{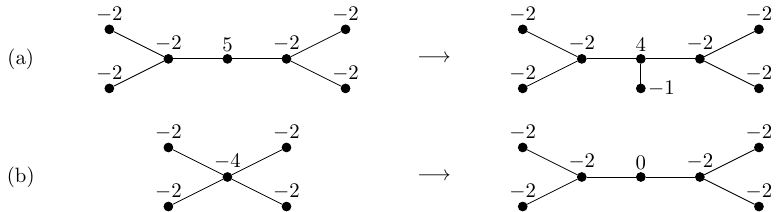}
$$

The graphs on the left hand side are connected weakly negative definite, while the graphs on the right hand side are \emph{not} weakly negative definite. The same holds for the positive blow up of the first graph.
\end{Example}

\subsection[Step 2: Negative definiteness of Gamma']{Step 2: Negative definiteness of $\boldsymbol{\Gamma'}$}
As a second step, we prove the following proposition.

\begin{Proposition}\label{prop:app}
Assume that $\Gamma$ is a weakly negative definite plumbing graph such that all its strings are negative definite $($for this it is enough that all the string decorations are $\leq -2)$.
 Then~$\Gamma$ itself is negative definite.
\end{Proposition}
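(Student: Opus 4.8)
The plan is to exploit the block structure of $M$ induced by the partition of the vertices into nodes and string vertices, and to recognise the weakly negative definite hypothesis as a statement about a Schur complement. First I would order the vertices so that the string vertices come first and the nodes $\mathcal{N}$ come last, and write
\[
 M = \begin{pmatrix} A & B \\ B^T & D \end{pmatrix},
\]
where $A$ is the block indexed by string vertices, $D$ the $s_3 \times s_3$ block indexed by the nodes, and $B$ records the edges joining strings to nodes. Because deleting the nodes disconnects $\Gamma$ into its strings, $A$ is block diagonal with one block per string, and each such block is exactly the intersection matrix of that string. By hypothesis every string is negative definite, so $A$ is negative definite; in particular $A$ is invertible, which is what makes the whole argument go.

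Next I would form the Schur complement $S := D - B^T A^{-1} B$ and invoke the standard identity for the inverse of a block matrix: the bottom-right $s_3\times s_3$ block of $M^{-1}$ equals $S^{-1}$. But this block is precisely $M^{-1}|_{\Q^{s_3}}$, the restriction of the form $M^{-1}$ to the subspace spanned by the nodes. Hence the weakly negative definite hypothesis is exactly the assertion that $S^{-1}$ is negative definite, which is equivalent to $S$ itself being negative definite.

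Finally I would conclude via the congruence
\[
 M = \begin{pmatrix} I & 0 \\ B^T A^{-1} & I \end{pmatrix} \begin{pmatrix} A & 0 \\ 0 & S \end{pmatrix} \begin{pmatrix} I & A^{-1} B \\ 0 & I \end{pmatrix},
\]
which exhibits $M$ as congruent to the block-diagonal matrix $\operatorname{diag}(A,S)$. Since both $A$ and $S$ are negative definite, so is $\operatorname{diag}(A,S)$, and by Sylvester's law of inertia $M$ is negative definite. (Equivalently, one may quote Haynsworth inertia additivity: the inertia of $M$ equals the sum of the inertias of $A$ and of $S$.)

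The only genuinely delicate point is the identification carried out in the second step: that the weakly negative definite condition, which is phrased in terms of the restricted form $M^{-1}|_{\Q^{s_3}}$, matches exactly the negative definiteness of the Schur complement $S$ arising from the block decomposition of $M$. Everything else is the routine linear algebra of Schur complements, together with the elementary observation—used to justify the parenthetical remark in the statement—that a string whose decorations are all $\leq -2$ has a tridiagonal, diagonally dominant intersection matrix and is therefore negative definite.
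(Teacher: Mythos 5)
Your proposal is correct and is essentially the paper's own argument: the paper proves this via its Proposition~\ref{prop:app2}, which performs the identical Schur-complement congruence (with the node and string blocks ordered the other way), using the same key identity that the Schur complement of the string block is the inverse of the node block of $M^{-1}$, so that the weakly negative definite hypothesis yields negative definiteness of $S$ and then of $M$ by congruence. The only cosmetic difference is your choice of block ordering and the explicit appeal to Sylvester's law of inertia, which the paper leaves implicit.
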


In fact, a more general statement can be proved. Assume that we partition the set of vertices into two disjoint subsets: $V=V'\sqcup V''$ of cardinalities $s'$ and $s''$ respectively, and accordingly we have the direct sum decomposition \smash{$\Q^s=\Q^{s'}\oplus \Q^{s''}$}.
Assume that the matrix $M$ is nondegenerate and let us decompose $M$ and $M^{-1}$ into blocks according to the decomposition \smash{$\Q^s=\Q^{s'}\oplus \Q^{s''}$} as follows:
\[
M=\begin{pmatrix} A& B\\ C& D\end{pmatrix}, \qquad
M^{-1}=\begin{pmatrix} A'& B'\\ C'& D'\end{pmatrix}.
\]
\begin{Proposition}\label{prop:app2}
 Using the above notation, assume that $M$ is nondegenerate $($and symmetric$)$. Then if
 $D$ and $A'$ are negative definite then $M$ itself is negative definite.
\end{Proposition}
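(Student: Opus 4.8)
The plan is to reduce the statement to the classical Schur complement criterion for definiteness. Since $M$ is symmetric we have $C=B^{T}$ (and $M^{-1}$ is symmetric, so $C'=(B')^{T}$), and since $D$ is negative definite it is in particular invertible, so the Schur complement $S:=A-BD^{-1}C$ is well defined. The first step is to identify $A'$ with $S$. Because $M$ is nondegenerate and $\det M=\det D\cdot\det S$, the Schur complement $S$ is invertible, and the block-inverse formula gives that the top-left block of $M^{-1}$ is exactly $A'=S^{-1}=\bigl(A-BD^{-1}C\bigr)^{-1}$. Thus the hypothesis that $A'$ is negative definite translates, upon inverting (the inverse of a symmetric negative definite matrix is again negative definite), into the statement that $S=A-BD^{-1}C$ is itself negative definite.

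The second step is a completing-the-square (congruence) computation. Writing a vector $v=(x,y)$ according to the decomposition $\Q^{s}=\Q^{s'}\oplus\Q^{s''}$ and setting $w:=y+D^{-1}Cx$, one verifies the identity
\[
v^{T}Mv = x^{T}Ax+2x^{T}By+y^{T}Dy = x^{T}\bigl(A-BD^{-1}C\bigr)x + w^{T}Dw ,
\]
which is just the matrix congruence diagonalizing $M$ into the blocks $S$ and $D$; the cross term is symmetrized using $C=B^{T}$, so that $x^{T}By+y^{T}Cx=2x^{T}By$. Both summands on the right are now nonpositive: the first because $S$ is negative definite (Step~1) and the second because $D$ is negative definite. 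Moreover $v^{T}Mv=0$ forces both $x=0$ and $w=0$, hence also $y=-D^{-1}Cx=0$, so $v=0$. Therefore $M$ is negative definite, which is the claim. Proposition~\ref{prop:app} is then the special case in which $V''$ is the set of string vertices, so that $D$ is the principal submatrix of $M$ on those vertices, namely the direct sum of the (negative definite) string matrices, while $V'=\mathcal{N}$ is the set of nodes, for which $A'=M^{-1}|_{\Q^{s_3}}$ is negative definite precisely by the weak negativity hypothesis.

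I expect no genuine conceptual obstacle here; the entire content is carried by the identity $A'=\bigl(A-BD^{-1}C\bigr)^{-1}$ together with the fact that inversion preserves negative definiteness. The only points that require care are bookkeeping: checking that $D$ and $S$ are invertible, so that both the Schur complement and the block-inverse formula are legitimate (the former from $D$ being negative definite, the latter from $M$ nondegenerate via $\det M=\det D\cdot\det S$), and keeping the transposes straight when symmetrizing the cross term. The one place where I would be careful in the writeup is matching the partition $V=V'\sqcup V''$ used in Proposition~\ref{prop:app2} with the nodes/strings splitting needed to deduce Proposition~\ref{prop:app}, so that the block $A'$ really is the restriction $M^{-1}|_{\Q^{s_3}}$ appearing in the definition of weak negativity.
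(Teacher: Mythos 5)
Your proposal is correct and follows essentially the same route as the paper: both identify the top-left block $A'$ of $M^{-1}$ with the inverse of the Schur complement $S=A-BD^{-1}C$ (so $A'$ negative definite gives $S$ negative definite), and both conclude via the congruence of $M$ with the block diagonal matrix $\bigl(\begin{smallmatrix} S & 0\\ 0 & D\end{smallmatrix}\bigr)$ --- the paper writes this as the block $LDU$ factorization with $\bigl(BD^{-1}\bigr)^{T}=D^{-1}C$, while you write the same congruence pointwise as completing the square with $w=y+D^{-1}Cx$. Your bookkeeping ($\det M=\det D\cdot\det S$ for invertibility of $S$, symmetry of $S$, and the matching of $A'$ with $M^{-1}|_{\Q^{s_3}}$ in the deduction of Proposition~\ref{prop:app}) is all sound.
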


\begin{proof}
 Since $D$ is nondegenerate, we can write
\[
\begin{pmatrix} A& B\\ C& D\end{pmatrix}=
\begin{pmatrix} I& BD^{-1}\\ 0& I\end{pmatrix}
\begin{pmatrix} S& 0\\ 0& D\end{pmatrix}
\begin{pmatrix} I& 0\\ D^{-1}C& I\end{pmatrix},
\]
where $S:= A-BD^{-1}C$. Since $M$ is nondegenerate, $S$ also must be nondegenerate.
Inverting~the above decomposition, we get that in fact $S^{-1}=A'$. Therefore, by the hypothesis regarding~$A'$,~$S$ is negative definite too. That is, the matrix
\smash{$\bigl(\begin{smallmatrix} S& 0\\ 0& D\end{smallmatrix}\bigr)$} is negative definite.
Then by the symmetry of~the matrices, the transposed matrix \smash{$\bigl(BD^{-1}\bigr)^T$} equals
$D^{-1}C$, hence $M$ itself is negative definite.
\end{proof}

The matrix $S$ is called the \emph{Schur complement}. In the proof we used as model the computations from~\cite[p.~526]{Nem22}, where some other applications regarding negative definite graphs can be found.

 \begin{proof}[Proof of Proposition \ref{prop:app}]
Apply Proposition \ref{prop:app2} when $V'$ (resp.\ $V''$) is the set of
 nodes (resp.\ vertices of the strings).
\end{proof}

\subsection{Proof of step 1}\label{ss:8.4}
Let us return back to the proof of the first step.

In this step, we modify the {\it strings}
of the graph as follows: we blow up edges creating new~$-1$ vertices,
we blow down the $\pm 1$ vertices, and we also perform 0-chain absorptions
(similarly to
how in~\cite[Theorem~4.1]{Neu81} the `normal form' of graphs is obtained).
In this way, we either transform the string to a negative definite
string (with all its entries $\leq -2$), to an edge connecting two nodes, or we eliminate it completely
(decreasing the number of nodes).

Before we start the discussion of these cases, let us mention that along
\begin{itemize}\itemsep=0pt
 \item a $(+1)$ blow up/down the determinant of the plumbing matrix $M$
stays stable;
 \item a $(-1)$ blow up/down and 0-chain absorption $\det(M)$ changes its sign.
\end{itemize}

Recall also that \smash{$M^{-1}_{uv}=(-1)^{u+v} \det\bigl(M^{\rm adj}_{uv}\bigr)/\det(M)$},
where $M^{\rm adj}_{uv}$ is obtained from $M$ by deleting the $u$-th row and $v$-th column. In the case of matrices associated with plumbing trees,
\smash{$\det\bigl(M^{\rm adj}_{uv}\bigr)$} can be interpreted as determinants of certain subgraphs of $\Gamma$. Indeed,
fix $u,v\in V$ and let $p_{uv}$ be the shortest path in $\Gamma$ connecting $u$ and $v$. Let $\Gamma\setminus p_{uv}$ be the full subgraph obtained from~$\Gamma$ by deleting the vertices of $p_{uv}$ and adjacent edges.
Then
\begin{equation}\label{eq:app1}
M^{-1}_{uv}=-\frac{\det(-M(\Gamma\setminus p_{uv}))}{\det(-M)}.
\end{equation}

Assume that we make a move that preserves the number of nodes. Then the subspace generated by the nodes stays stable too, hence the entries $M^{-1}_{uv}$ before and after the move can be naturally compared. Furthermore, using the above facts regarding determinants and also~\eqref{eq:app1} we obtain that $M^{-1}_{uu}$ stays unmodified, and
for $u\not=v$ the entry $M^{-1}_{uv}$ either stays stable or it changes its sign.

Therefore, if we change conveniently base elements by their opposite in the subspace of the nodes, we can identify the forms $M^{-1}|_{\Q^{|\mathcal{N}|}}$, hence they are negative definite simultaneously.

Finally, we analyze those moves that decrease the number of nodes. We have to discuss several cases.

(a) Assume that $u$ is a $(\pm 1)$-vertex of valency one, it is connected to a vertex $v$
with valency three. Then, if we blow down $u$ then the node $v$ becomes a non-node in the new graph. However, exactly as in the previous discussion (when we preserved the number of nodes) we obtain that~$M^{-1}$ restricted to the
subspace of nodes in the first graph can be identified (after a base change)
with~$M^{-1}$ restricted to the
subspace generated by the nodes and $v$ in the second graph.
Since the first one is negative definite, the second one is negative definite too,
hence on any subspace (i.e., on the subspace generated by the nodes of the second graph) is negative definite as well.

(b)
Assume that $u$ is a vertex decorated by 0, it has valency two, and both of its
adjacent vertices are nodes $v$, $w$ (with decorations $e$ and $f$).
Then after 0-chain absorption we eliminate~$u$ and its two edges, and identify
$v$ and $w$ replacing them by a single node with decoration $e+f$.
Then again, $M^{-1}$ restricted to the
subspace generated by the nodes in the new graph can be identified with
the form of the original situation restricted to a subspace of codimension one.
Hence again it is negative definite. (This step in the opposite direction is not working, see Example~\ref{ex:app1}.)

(c) We have to analyze one more case.
Assume that in $\Gamma$, we have a vertex $u$ of valency one with decoration $0$, connected to a vertex $v$
with valency at least three. In this case, the 0-chain absorption eliminates both $u$ and $v$, hence the resulting graph is not connected.

However, this situation can never occur with $\Gamma$ weakly negative definite.
Indeed, in this case, by \eqref{eq:app1}, $M^{-1}_{vv}$ is 0, hence
$\Gamma$ is not weakly negative definite.

\subsection*{Acknowledgements}

We are grateful to Sergei Gukov, Mrunmay Jagadale and Sunghyuk Park for useful discussions. We would also like to thank to the anonymous referees for their valuable suggestions. A.~N\'emethi was partially supported by `\'Elvonal (Frontier)' Grant KKP 144148. J.~Svoboda was supported by the Simons Foundation Grant {\it New structures in low-dimensional topology}. S.~Harichurn was supported by the 2020 FirstRand FNB Fund Education Scholarship Award and the University of KwaZulu-Natal's 2024 Vincent Maphai Scholarship Award.

\pdfbookmark[1]{References}{ref}
\LastPageEnding

\end{document}